\theoremstyle{plain}
\newtheorem{definition}{Definition}
\newtheorem{thm}[definition]{Theorem}
\newtheorem{lem}[definition]{Lemma}
\newtheorem{cor}[definition]{Corollary}
\newtheorem{rei}[definition]{Example}
\newcommand{\Mat}{{\rm Mat}}
\newcommand{\Det}{{\rm Det}}
\newcommand{\Sdet}{{\rm Sdet}}
\newcommand{\Tr}{{\rm Tr}}
\newcommand{\sgn}{{\rm sgn}}
\newcommand{\Hom}{{\rm Hom}}
\begin{document}
\title[]{Factorizations of group determinant in group algebra for any abelian subgroup}
\author[N. Yamaguchi]{Naoya Yamaguchi}
\date{\today}
\keywords{Dedekind's theorem; group determinant; group algebra.}
\subjclass[2010]{Primary 20C15; Secondary 15A15; 22D20.}

\maketitle

\begin{abstract}
We give a further extension and generalization of Dedekind's theorem over those presented by Yamaguchi. 
In addition, we give a corollary on irreducible representations of finite groups and a conjugation of the group algebra of the groups which have an index-two abelian subgroups. 
\end{abstract}

\section{\bf{INTRODUCTION}}
In this paper, we give an extension and generalization of Dedekind's theorem over those presented in reference \cite{Y}. 
The generalization in turn leads to a corollary on irreducible representations of finite groups. 
In addition, if a finite group has an index-two abelian subgroup, 
we can define a conjugation of elements of the group algebra by using the further extension of Dedekind's theorem.

Let $G$ be a finite group, 
$\widehat{G}$ a complete set of irreducible representations of $G$ over $\mathbb{C}$, and $\Theta(G)$ the group determinant of $G$. 
$\Theta(G)$ is the determinant of a matrix whose elements are independent variables $x_{g}$ corresponding to $g \in G$. 
Dedekind proved the following theorem about the irreducible factorization of the group determinant for any finite abelian group.

\begin{thm}[Dedekind \cite{W}]\label{thm:1.0.1}
Let $G$ be a finite abelian group. 
We have
$$
\Theta(G) = \prod_{\chi \in \widehat{G}} \sum_{g \in G} \chi (g) x_{g}. 
$$
\end{thm}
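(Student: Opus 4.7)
The plan is to diagonalize the group matrix $M = (x_{g^{-1}h})_{g,h \in G}$, whose determinant is $\Theta(G)$, using the characters of $G$ as eigenvectors. Since $G$ is abelian, every irreducible representation is one-dimensional, so $\widehat{G}$ consists of $|G|$ homomorphisms $\chi \colon G \to \mathbb{C}^{\times}$, and these will provide exactly enough eigenvectors to diagonalize the $|G| \times |G|$ matrix $M$.

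First, for each $\chi \in \widehat{G}$ I would consider the column vector $v_{\chi} = (\chi(g))_{g \in G}$ and compute $Mv_{\chi}$ componentwise. Writing $k = g^{-1}h$ and using multiplicativity, the $g$-th entry becomes
\[
(Mv_{\chi})_{g} = \sum_{h \in G} x_{g^{-1}h}\, \chi(h) = \sum_{k \in G} x_{k}\, \chi(gk) = \chi(g) \sum_{k \in G} \chi(k)\, x_{k},
\]
so that $v_{\chi}$ is an eigenvector of $M$ with eigenvalue $\lambda_{\chi} := \sum_{g \in G} \chi(g) x_{g}$.

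Next I would establish that the family $\{v_{\chi}\}_{\chi \in \widehat{G}}$ is linearly independent. This follows from the column orthogonality of characters of a finite abelian group: the matrix $P = (\chi(g))_{\chi,g}$ satisfies $PP^{*} = |G| I$, so $P$ is invertible. Hence the $v_{\chi}$ form a basis of $\mathbb{C}^{G}$, and $M$ is simultaneously diagonalizable in this basis with the $\lambda_{\chi}$ on the diagonal. Taking determinants yields
\[
\Theta(G) = \det M = \prod_{\chi \in \widehat{G}} \lambda_{\chi} = \prod_{\chi \in \widehat{G}} \sum_{g \in G} \chi(g) x_{g}.
\]

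The argument is essentially routine; the only real subtlety is bookkeeping in the convention for the group matrix, since the choice $M_{g,h} = x_{g^{-1}h}$ versus $M_{g,h} = x_{gh^{-1}}$ affects whether the eigenvalue comes out as $\sum_g \chi(g) x_g$ or $\sum_g \chi(g^{-1}) x_g$. Fixing the convention at the outset so that it matches the statement is the one place where a careful reader needs to check the computation, and the rest is an immediate consequence of the basic character theory of finite abelian groups.
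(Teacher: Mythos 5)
Your proof is correct. The paper does not actually supply a proof of this theorem---it is quoted as Dedekind's classical result with a citation---so there is nothing to compare line by line; but your diagonalization argument is the standard one, and it is exactly the mechanism the paper relies on later in disguise: Theorem~\ref{thm:6.1.1} (the regular representation decomposes as $\bigoplus_{\chi}\chi$ for abelian $G$) combined with Lemma~\ref{lem:6.1.3} specialized to $H=\{e\}$ and $m=1$ reproduces your eigenvector computation, with the invertible character matrix $P$ playing the role of the conjugating matrix. Your remark about the convention $x_{gh^{-1}}$ versus $x_{g^{-1}h}$ is the right thing to flag: with the paper's convention the eigenvalue attached to $\chi$ is $\sum_{g}\chi(g^{-1})x_{g}=\sum_{g}\overline{\chi}(g)x_{g}$, but since $\chi\mapsto\overline{\chi}$ permutes $\widehat{G}$ the product over all characters is unchanged, so the stated factorization holds either way.
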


Frobenius gave a generalization of Dedekind's theorem. 
In particular, he proved the following theorem about the irreducible factorization of the group determinant for any finite group.

\begin{thm}[Frobenius \cite{C2}]\label{thm:1.0.2}
Let $G$ be a finite group, for which we have the irreducible factorization, 
$$
\Theta(G) = \prod_{\varphi \in \widehat{G}} \det{\left( \sum_{g \in G} \varphi (g) x_{g} \right)^{\deg{\varphi}}}. 
$$
\end{thm}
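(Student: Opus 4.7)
The plan is to realize $\Theta(G)$ as the determinant of the left regular representation of $G$ evaluated at the generic element $X = \sum_{g \in G} x_{g} g$, and then to apply the standard decomposition of the regular representation into irreducibles.

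First, I would observe that, unwinding the definition of $\Theta(G)$, the matrix inside the determinant is exactly the matrix, with respect to the basis $\{g : g \in G\}$, of left multiplication by $X$ on the group algebra $\mathbb{C}[G]$, after extending scalars to the polynomial ring $R = \mathbb{C}[x_{g} : g \in G]$. Hence $\Theta(G) = \det \rho_{\mathrm{reg}}(X)$, where $\rho_{\mathrm{reg}}$ denotes the $R$-linear extension of the left regular representation of $G$.

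Next, I would invoke the Wedderburn decomposition of $\mathbb{C}[G]$, equivalently the isomorphism of complex representations
\[
\rho_{\mathrm{reg}} \cong \bigoplus_{\varphi \in \widehat{G}} \varphi^{\oplus \deg \varphi},
\]
which is implemented by a change-of-basis matrix $P \in \mathrm{GL}_{|G|}(\mathbb{C})$ independent of the variables $x_{g}$. The same $P$ therefore conjugates $\rho_{\mathrm{reg}}(X)$ into block-diagonal form over $R$, with exactly $\deg \varphi$ blocks equal to $\sum_{g \in G} \varphi(g) x_{g}$ for each $\varphi \in \widehat{G}$. Taking determinants block-wise then yields the claimed product formula.

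The main obstacle lies not in the factorization itself, but in the irreducibility assertion implicit in the word \emph{irreducible}: one must further verify that each factor $\det\bigl(\sum_{g \in G} \varphi(g) x_{g}\bigr)$ is an irreducible element of $R$ and that factors arising from non-equivalent irreducible representations are pairwise coprime. This deeper statement rests on the linear independence of matrix coefficients guaranteed by the Schur orthogonality relations, and constitutes the harder portion of Frobenius's original theorem; the factorization part, by contrast, is a clean consequence of the Wedderburn decomposition carried over to the polynomial ring $R$.
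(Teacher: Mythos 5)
The paper does not prove this statement at all: Theorem~\ref{thm:1.0.2} is quoted as a classical result of Frobenius with a citation to \cite{C2}, so there is no in-paper argument to compare against. Your proposal is the standard modern proof of the displayed identity, and the factorization part of it is correct and complete: the matrix $(x_{gh^{-1}})_{g,h\in G}$ is indeed the matrix of left multiplication by $\sum_{g}x_{g}g$ in the basis $\{g\}$ (left multiplication sends $h$ to $\sum_{g} x_{gh^{-1}}g$), the change-of-basis matrix realizing $\rho_{\mathrm{reg}}\cong\bigoplus_{\varphi}\varphi^{\oplus\deg\varphi}$ is a constant matrix independent of the $x_{g}$, and taking determinants of the resulting block-diagonal matrix over $R$ gives exactly $\prod_{\varphi}\det\left(\sum_{g}\varphi(g)x_{g}\right)^{\deg\varphi}$. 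This is also precisely the mechanism the paper itself uses elsewhere (Theorem~\ref{thm:6.1.1} together with Lemma~\ref{lem:5.1.2} and the Kronecker-product computation in Lemma~\ref{lem:6.1.3}), so your route is consistent in spirit with the paper's machinery.

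The one piece you flag but do not carry out is the irreducibility and pairwise non-associateness of the factors $\det\left(\sum_{g}\varphi(g)x_{g}\right)$, which is what makes the product an \emph{irreducible} factorization rather than merely a factorization. You correctly identify that this rests on the linear independence of matrix coefficients (so that the generic matrix $\sum_{g}\varphi(g)x_{g}$ specializes onto all of $\Mat(\deg\varphi,\mathbb{C})$, reducing the claim to irreducibility of the determinant of a matrix of independent indeterminates, and so that inequivalent $\varphi$ give non-proportional linear forms). Since the theorem as stated asserts the irreducible factorization, a complete proof would need this step written out; as it stands your argument establishes the displayed equality but only sketches why it is the irreducible factorization.
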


Let $\mathbb{C} G$ be the group algebra of $G$ over $\mathbb{C}$, 
$R = \mathbb{C}[x_{g}] = \mathbb{C}\left[x_{g} ; g \in G \right]$ the polynomial ring in $\left\{ x_{g} \: \vert \: g \in G \right\}$ with coefficients in $\mathbb{C}$, 
$RG = R \otimes \mathbb{C}G = \left\{ \sum_{g \in G} A_{g} g \: \vert \: A_{g} \in R \right\}$ the group algebra of $G$ over $R$, 
$H$ an abelian subgroup of $G$, and $[G : H]$ the index of $H$ in $G$. 
The paper \cite{Y} gives the following extension and generalization of Dedekind's theorem that are different from the theorem by Frobenius.

\begin{thm}[Extension of Dedekind's theorem \cite{Y}]\label{thm:1.0.3}
Let $G$ be a finite abelian group, $e$ the unit element of $G$, and $H$ a subgroup of $G$. 
For every $h \in H$, there exists a homogeneous polynomial $a_{h} \in R$ such that $\deg{a_{h}} = [G : H]$ and 
$$
\Theta(G) e = \prod_{\chi \in \widehat{H}} \sum_{h \in H} \chi(h) a_{h} h. 
$$
If $H = G$, we can take $a_{h} = x_{h}$ for each $h \in H$. 
\end{thm}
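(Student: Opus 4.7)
The plan is to apply Theorem \ref{thm:1.0.1} to $G$ itself, partition the linear factors according to the restriction map $\widehat{G} \to \widehat{H}$, and then Fourier-invert on the abelian group $H$ to read off the coefficients $a_{h}$.

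Concretely, Theorem \ref{thm:1.0.1} gives $\Theta(G) = \prod_{\chi \in \widehat{G}} \sum_{g \in G} \chi(g) x_{g}$. Because $G$ is abelian and $H \leq G$, every $\psi \in \widehat{H}$ extends to exactly $n := [G:H]$ characters of $G$ (the extensions form a torsor for $\widehat{G/H}$), so partitioning by restriction yields
$$
\Theta(G) = \prod_{\psi \in \widehat{H}} P_{\psi}, \qquad P_{\psi} := \prod_{\chi \in \widehat{G},\, \chi|_{H} = \psi} \sum_{g \in G} \chi(g) x_{g},
$$
with each $P_{\psi} \in R$ homogeneous of degree $n$. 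I would then define
$$
a_{h} := \frac{1}{|H|} \sum_{\psi \in \widehat{H}} \overline{\psi(h)} \, P_{\psi} \qquad (h \in H),
$$
which, as a $\mathbb{C}$-linear combination of degree-$n$ homogeneous polynomials, is itself homogeneous of degree $n$. Orthogonality of characters of $H$ inverts this to $\sum_{h \in H} \psi(h) a_{h} = P_{\psi}$ for every $\psi \in \widehat{H}$.

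It remains to verify the product identity inside the commutative ring $RH \subset RG$. Using the primitive idempotents $e_{\omega} := |H|^{-1} \sum_{h \in H} \overline{\omega(h)} h$ (which are central since $H$ is abelian) together with the relation $h \cdot e_{\omega} = \omega(h) e_{\omega}$, the element $f_{\psi} := \sum_{h \in H} \psi(h) a_{h} h$ satisfies
$$
f_{\psi} \cdot e_{\omega} = \Big( \sum_{h \in H} (\psi\omega)(h) a_{h} \Big) e_{\omega} = P_{\psi\omega} \, e_{\omega}.
$$
Multiplying over $\psi$ and using that $\psi \mapsto \psi\omega$ permutes $\widehat{H}$ gives $\big( \prod_{\psi} f_{\psi} \big) e_{\omega} = \Theta(G) \, e_{\omega}$ for every $\omega$; summing over $\omega$ and using $\sum_{\omega} e_{\omega} = e$ delivers $\prod_{\psi} f_{\psi} = \Theta(G) e$. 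The case $H = G$ is then immediate: each $P_{\psi}$ has the single factor $\sum_{g} \psi(g) x_{g}$, and character orthogonality collapses the definition of $a_{h}$ to $x_{h}$.

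The argument is, in effect, just Theorem \ref{thm:1.0.1} combined with orthogonality on the abelian group $H$, so I do not expect a deep representation-theoretic obstacle. The points that most warrant care are verifying that every $\psi \in \widehat{H}$ has precisely $n$ extensions to $G$ (so that $P_{\psi}$ really has the stated degree) and correctly tracking the re-indexing $\psi \mapsto \psi\omega$ when the product over $\widehat{H}$ is computed against the $\omega$-isotypic component.
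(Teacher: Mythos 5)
Your proof is correct, but it takes a genuinely different route from the paper's. The paper never proves Theorem~\ref{thm:1.0.3} directly: it is obtained as the abelian-$G$ special case of Theorem~\ref{thm:6.1.5}, whose proof runs entirely through the left regular representation $L_{1} : RG \rightarrow \Mat([G:H], RH)$ --- the coefficients $a_{h}$ are \emph{defined} as the coefficients of the noncommutative determinant $\Theta(G:H) = \det L_{1}\left(\sum_{g} x_{g} g\right) = \sum_{h} a_{h} h$, the identity $\Theta(G)e = \det\left(L_{3}(\Theta(G:H))\right)$ comes from the fact that a composition of regular representations is again a regular representation (Lemma~\ref{lem:2.1.5}), and the factorization over $\widehat{H}$ comes from decomposing the regular representation of the abelian group $H$ into characters (Lemma~\ref{lem:6.1.3}). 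You instead take Dedekind's Theorem~\ref{thm:1.0.1} for $G$ as input, group its linear factors by restriction of characters to $H$, define $a_{h}$ by Fourier inversion on $H$, and verify the product identity in $RH$ with the primitive idempotents $e_{\omega}$; each step checks out (every $\psi \in \widehat{H}$ does have exactly $[G:H]$ extensions to $G$, and the reindexing $\psi \mapsto \psi\omega$ against the $\omega$-isotypic component is handled correctly). The trade-off: your argument is more elementary and yields the explicit closed formula $a_{h} = |H|^{-1}\sum_{\psi}\overline{\psi(h)}P_{\psi}$, but it uses the abelianness of $G$ twice (to invoke Theorem~\ref{thm:1.0.1} and to extend characters from $H$ to $G$), so unlike the paper's machinery it cannot be pushed to the situation where only $H$ is abelian (Theorem~\ref{thm:1.1.1}). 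One micro-point you leave implicit (as does the paper): $a_{h}$ is homogeneous of degree $[G:H]$ only if it is nonzero; this does hold, since specializing $x_{g} = 0$ for $g \notin H$ turns $P_{\psi}$ into $\left(\sum_{k \in H}\psi(k)x_{k}\right)^{[G:H]}$ and shows that the monomial $x_{h}x_{e}^{[G:H]-1}$ survives in $a_{h}$ with a positive integer coefficient.
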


\begin{thm}[Generalization of Dedekind's theorem \cite{Y}]\label{thm:1.0.4}
Let $G$ be a finite abelian group and $H$ a subgroup of $G$. 
For every $h \in H$, there exists a homogeneous polynomial $a_{h} \in R$ such that $\deg{a_{h}} = \left[G : H \right]$ and 
$$
\Theta(G) = \prod_{\chi \in \widehat{H}} \sum_{h \in H} \chi(h) a_{h}. 
$$
If $H = G$, we can take $a_{h} = x_{h}$ for each $h \in H$. 
\end{thm}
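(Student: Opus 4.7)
The plan is to derive Theorem \ref{thm:1.0.4} as an immediate consequence of Theorem \ref{thm:1.0.3} by applying an augmentation-type homomorphism from the group algebra $RG$ to the polynomial ring $R$. Specifically, I would consider the $R$-algebra homomorphism $\varepsilon \colon RG \to R$ defined by $\varepsilon\left( \sum_{g \in G} A_{g} g \right) = \sum_{g \in G} A_{g}$; equivalently, $\varepsilon(g) = 1$ for every $g \in G$, extended $R$-linearly and multiplicatively.

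The key observation is that $\varepsilon$ is a ring homomorphism, so it commutes with the products appearing on the right-hand side of Theorem \ref{thm:1.0.3}. First, I would take the polynomials $a_{h} \in R$ (with $\deg a_{h} = [G : H]$) provided by Theorem \ref{thm:1.0.3}, so that
$$
\Theta(G) e = \prod_{\chi \in \widehat{H}} \sum_{h \in H} \chi(h) a_{h} h
$$
holds in $RG$. Next, I would apply $\varepsilon$ to both sides. On the left, since $\varepsilon(e) = 1$, we obtain $\varepsilon\bigl(\Theta(G)e\bigr) = \Theta(G)$. On the right, using that $\varepsilon$ is multiplicative and that $\varepsilon(h) = 1$ for every $h \in H$, we obtain
$$
\varepsilon\!\left( \prod_{\chi \in \widehat{H}} \sum_{h \in H} \chi(h) a_{h} h \right) = \prod_{\chi \in \widehat{H}} \sum_{h \in H} \chi(h) a_{h},
$$
which is exactly the desired identity. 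The degree condition on the $a_{h}$ is inherited directly from Theorem \ref{thm:1.0.3}, and the final clause for $H = G$ is likewise inherited, since in that case Theorem \ref{thm:1.0.3} allows $a_{h} = x_{h}$.

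There is no real obstacle here; the only point requiring care is verifying that $\varepsilon$ is a well-defined $R$-algebra homomorphism on $RG$, which is standard, and checking that the $a_{h}$ constructed in the proof of Theorem \ref{thm:1.0.3} can simply be reused without modification. In particular, the entire content of Theorem \ref{thm:1.0.4} is contained in Theorem \ref{thm:1.0.3}; passing from the group-algebra identity to the scalar identity is a one-line application of the augmentation.
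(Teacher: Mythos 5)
Your proposal is correct and matches the paper's own approach: the paper derives its generalization (Theorem~\ref{thm:6.1.6}) from its extension (Theorem~\ref{thm:6.1.5}) by applying precisely this augmentation map, which it calls the fundamental $RG$-function $F : RG \to R$ with $F(g) = 1$ for all $g \in G$. The degree condition and the $H = G$ clause are inherited exactly as you describe.
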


Here, we give a further extension of Theorem~$\ref{thm:1.0.3}$ and generalization of Theorem~$\ref{thm:1.0.4}$.

\subsection{Results}
The following theorem is the further extension of Dedekind's theorem.

\begin{thm}[Further extension of Dedekind's theorem]\label{thm:1.1.1}
Let $G$ be a finite group, $e$ the unit element of $G$, and $H$ an abelian subgroup of $G$. 
For every $h \in H$, there exists a homogeneous polynomial $a_{h} \in R$ such that $\deg{a_{h}} = [G:H]$ and 
$$
\Theta(G) e = \prod_{\chi \in \widehat{H}} \sum_{h \in H} \chi(h) a_{h} h. 
$$
If $H$ is normal and $h$ is a conjugate of $h'$ on $G$, then $a_{h} = a_{h'}$. 
\end{thm}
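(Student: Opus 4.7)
The plan is to realize $\Theta(G)e$ as a determinant over the commutative ring $RH$ and then factor it via the characters of $H$. Fix coset representatives $t_1,\ldots,t_n$ so that $G=\bigsqcup_{i=1}^n t_iH$ and view $RG$ as a free right $RH$-module with basis $\{t_i\}$; because $H$ is abelian, $RH$ is commutative. Left multiplication $L_X$ by $X=\sum_{g\in G}x_g g$ is right $RH$-linear, and its matrix $M_X\in M_n(RH)$ in this basis has entries $M_{ji}=\sum_{h\in H}x_{t_jht_i^{-1}}h$. Set $Z:=\det M_X\in RH$. Each $M_{ji}$ is homogeneous of degree $1$, so $Z$ is homogeneous of degree $n=[G:H]$, and writing $Z=\sum_{h\in H}a_h h$ produces polynomials $a_h\in R$ of degree $[G:H]$ as required.

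For the factorization, I would use the primitive idempotents $e_\chi=\frac{1}{|H|}\sum_h\chi(h^{-1})h$ of $\mathbb{C}H$. Since $h\cdot e_\chi=\chi(h)e_\chi$, the decomposition $RG=\bigoplus_{\chi\in\widehat H}V_\chi$ with $V_\chi=\bigoplus_i R\cdot t_ie_\chi$ is $L_X$-stable, and the matrix of $L_X|_{V_\chi}$ in the basis $\{t_ie_\chi\}$ is $(\chi(M_{ji}))$. Hence $\Theta(G)=\det L_X=\prod_\chi\chi(Z)$. To lift this identity into $RG$, set $W_\chi:=\sum_h\chi(h)a_h h$. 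A direct computation using $h\cdot e_\psi=\psi(h)e_\psi$ gives $W_\chi\cdot e_\psi=(\chi\psi)(Z)\,e_\psi$; iterating and reindexing $\chi\mapsto\chi\psi^{-1}$ yields $\bigl(\prod_\chi W_\chi\bigr)e_\psi=\bigl(\prod_\chi\chi(Z)\bigr)e_\psi=\Theta(G)\,e_\psi$ for every $\psi$. Summing over $\psi$ via $\sum_\psi e_\psi=e$ then produces $\prod_\chi W_\chi=\Theta(G)\,e$, which is the desired identity.

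For the last clause, assume $H\triangleleft G$ and fix $g\in G$; it suffices to show $\alpha_g(Z)=Z$, where $\alpha_g$ is the $RH$-automorphism extending $h\mapsto ghg^{-1}$, because reading off coefficients then gives $a_h=a_{ghg^{-1}}$. I would derive this from two independent invariances of $Z$. First, $L_{gXg^{-1}}=L_g L_X L_{g^{-1}}$ with $L_g$ an invertible right $RH$-linear map, so $M_{gXg^{-1}}$ is conjugate to $M_X$ in $M_n(RH)$; since $M_{gXg^{-1}}=\rho_g(M_X)$ entrywise for the variable substitution $\rho_g:x_{g'}\mapsto x_{g^{-1}g'g}$, commutativity of $RH$ gives $\rho_g(Z)=Z$. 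Second, take the alternative coset representatives $s_i:=g^{-1}t_ig$, a complete system by normality; the resulting matrix $M'$ still satisfies $\det M'=Z$ (it is conjugate to $M_X$ by a monomial matrix over $RH$), but direct computation of its entries yields $M'_{ji}=\rho_g\alpha_{g^{-1}}(M_{ji})$, so $\rho_g\alpha_{g^{-1}}(Z)=Z$. Since $\rho_g$ and $\alpha_{g^{-1}}$ commute (they act on the two tensor factors of $RH=R\otimes\mathbb{C}H$ separately), combining the two invariances yields $\alpha_g(Z)=Z$.

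The main obstacle I anticipate is this last step: plain $\rho_g$-invariance of $Z$ is not enough, and one must introduce the alternative coset reparametrization to produce the combined invariance $\rho_g\alpha_{g^{-1}}(Z)=Z$, from which the $\rho_g$-part can be cancelled to extract the required $H$-conjugation invariance $\alpha_g(Z)=Z$. Everything else is routine linear algebra over the commutative ring $RH$ together with manipulation of idempotents in $\mathbb{C}H$.
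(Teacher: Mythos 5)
Your proof is correct, and although it is built around the same underlying object as the paper's --- the element $Z=\Theta(G:H)=(\det\circ L)(\alpha)=\sum_{h\in H}a_h h\in RH$ obtained as the determinant over the commutative ring $RH$ of the matrix of left multiplication on $RG$ viewed as a free right $RH$-module --- both halves of your argument take a genuinely different technical route. For the factorization, the paper composes regular representations ($L_V=L_U\circ L_T$, Lemma~\ref{lem:2.1.5}) to write $\Theta(G)e$ as $\det\circ L_2\circ L_1$, and then pushes the determinant through the tower using the Kronecker-product form of $L_{T}$ and a commutative diagram of determinants (Lemmas~\ref{lem:2.1.4}, \ref{lem:6.1.2}--\ref{lem:6.1.4}); you instead block-diagonalize $L_X$ directly with the primitive idempotents $e_\chi$ of $\mathbb{C}H$ to obtain the scalar identity $\Theta(G)=\prod_{\chi}\chi(Z)$, and then lift it into $RH$ by evaluating $\prod_\chi W_\chi$ against each $e_\psi$, reindexing, and summing over $\psi$. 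Your lifting step is cleaner and bypasses most of the paper's Section~2--3 machinery, at the mild cost of using $\mathbb{C}\subset R$ so that the idempotents exist (harmless here, but less ring-general than the paper's setup). For the conjugacy clause your route is also different and arguably more complete: the paper invokes Corollary~\ref{cor:4.1.6} ($\Det(A)\in Z(RG)\cap RH$), whose proof is not spelled out, whereas your two-transversal argument --- combining $\rho_g(Z)=Z$ (conjugating $M_X$ by the matrix of $L_g$) with $\rho_g\alpha_{g^{-1}}(Z)=Z$ (the transversal $s_i=g^{-1}t_ig$, legitimate precisely because $H$ is normal) and cancelling $\rho_g$ --- correctly isolates the $H$-conjugation invariance $\alpha_g(Z)=Z$ and supplies exactly the content the paper leaves implicit. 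In short: same theorem, same pivotal element $Z$, but your proof is more self-contained, while the paper's detour builds reusable machinery ($\Det$, characteristic polynomials, the Study-determinant analogy) needed in its later sections.
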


Note that the equality in Theorem~$\ref{thm:1.1.1}$ is an equality in $R H$. 
Theorem~$\ref{thm:1.1.1}$ is proven using an extension of the group determinant $\Theta(G:H)$. 
The group determinant $\Theta(G:H)$ is an element of $RH$, and it is defined by using a left regular representation of $RH$. 
The left regular representation is reviewed in Section~$2$. 
In addition, Section~$2$ gives two expressions for the regular representation and shows that composition of regular representations is a regular representation. 
These expressions are helpful for describing some of the properties of $\Theta(G:H)$.

Above, we said that the group determinant is defined by using a left regular representation. 
In more detail, we define a noncommutative determinant by using a left regular representation and define the group determinant by using the noncommutative determinant. 
We know that the noncommutative determinant is analogous to the Study determinant. 
The Study determinant is a quaternionic determinant, defined by using the regular representation $\psi$ of the quaternions \cite{A}. 
In Section~$3$ and $4$, we describe the relationship between the noncommutative determinant and the Study determinant and their properties.

In the next section, we define the extension of the group determinant $\Theta(G:H)$ and give some properties of $\Theta(G:H)$.

In Section~$6$, we prove the further extension and generalization of Dedekind's theorem. 
In particular, Theorem~$\ref{thm:1.1.1}$ leads to the following theorem that is the further generalization of Dedekind's theorem.

\begin{thm}[Further generalization of Dedekind's theorem]\label{thm:1.1.2}
Let $G$ be a finite group and $H$ an abelian subgroup of $G$. 
For every $h \in H$, there exists a homogeneous polynomial $a_{h} \in R$ such that $\deg{a_{h}} = [G:H]$ and 
$$
\Theta(G) = \prod_{\chi \in \widehat{H}} \sum_{h \in H} \chi(h) a_{h}. 
$$
If $H$ is normal and $h$ is a conjugate of $h'$ on $G$, then $a_{h} = a_{h'}$. 
\end{thm}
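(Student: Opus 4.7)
The plan is to derive Theorem~\ref{thm:1.1.2} as an immediate consequence of Theorem~\ref{thm:1.1.1} by pushing the identity from $RH$ down to $R$, in direct analogy with how Theorem~\ref{thm:1.0.4} follows from Theorem~\ref{thm:1.0.3} in the abelian case.

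First I would invoke Theorem~\ref{thm:1.1.1} to obtain, for each $h \in H$, a homogeneous polynomial $a_{h} \in R$ of degree $[G:H]$, enjoying $G$-conjugation symmetry whenever $H$ is normal in $G$, such that
$$\Theta(G) e = \prod_{\chi \in \widehat{H}} \sum_{h \in H} \chi(h) a_{h} h$$
holds in $RH$. These are the very same polynomials that will appear in the statement of Theorem~\ref{thm:1.1.2}, so all the auxiliary properties (degree, homogeneity, conjugation invariance) will be inherited for free.

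Next I would apply the augmentation map $\epsilon : RH \to R$ defined by $\epsilon\bigl( \sum_{h \in H} r_{h} h \bigr) = \sum_{h \in H} r_{h}$. Since $H$ is a group, this is the $R$-algebra homomorphism induced by the trivial character of $H$; in particular it is multiplicative. Because $\epsilon(e) = 1$, the left-hand side becomes $\Theta(G)$. Because $\epsilon$ is a ring homomorphism and $\epsilon(h) = 1$ for every $h \in H$, the right-hand side becomes
$$\prod_{\chi \in \widehat{H}} \epsilon\Bigl( \sum_{h \in H} \chi(h) a_{h} h \Bigr) = \prod_{\chi \in \widehat{H}} \sum_{h \in H} \chi(h) a_{h},$$
which is precisely the desired factorization in $R$.

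The real work — constructing the $a_{h}$ and establishing the factorization inside the noncommutative ring $RH$ — is done entirely in the proof of Theorem~\ref{thm:1.1.1}, through the machinery of the extended group determinant $\Theta(G:H)$ introduced in Section~5. Relative to that theorem, there is essentially no obstacle at this stage: one need only observe that the trivial-character augmentation is a ring homomorphism of $R$-algebras, which is a standard fact about group algebras. Thus the main ``difficulty'' here is simply that one must already have Theorem~\ref{thm:1.1.1} in hand; the present statement is then a one-line corollary.
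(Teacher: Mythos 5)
Your proposal is correct and is exactly the paper's own argument: the augmentation map $\epsilon$ you describe is precisely what the paper calls the fundamental $RG$-function $F$ (the $R$-algebra homomorphism sending every group element to $1$), and the paper likewise obtains Theorem~\ref{thm:1.1.2} by applying it to the identity of Theorem~\ref{thm:1.1.1}. No differences worth noting.
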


From Theorem~$\ref{thm:1.1.2}$, we have the following corollary on irreducible representations of finite groups.

\begin{cor}\label{cor:1.1.3}
Let $G$ be a finite group and $H$ an abelian subgroup of $G$. 
For all $\varphi \in \widehat{G}$, we have 
\begin{align*}
\deg{\varphi} \leq [G:H]. 
\end{align*}
\end{cor}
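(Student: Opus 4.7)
The plan is to compare the factorization of $\Theta(G)$ furnished by Theorem~\ref{thm:1.1.2} with Frobenius's factorization (Theorem~\ref{thm:1.0.2}), working inside the unique factorization domain $R = \mathbb{C}[x_{g} ; g \in G]$. First I would fix $\varphi \in \widehat{G}$ and set
$$P_{\varphi} := \det\left(\sum_{g \in G} \varphi(g) x_{g}\right),$$
a homogeneous polynomial in $R$ of total degree $\deg{\varphi}$, since it is the determinant of a $\deg{\varphi} \times \deg{\varphi}$ matrix whose entries are linear in the $x_{g}$. It is a classical companion result to Theorem~\ref{thm:1.0.2}, due to Frobenius, that each such $P_{\varphi}$ is irreducible in $R$; I would simply cite this fact.

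Next, Theorem~\ref{thm:1.1.2} provides the alternative factorization
$$\Theta(G) = \prod_{\chi \in \widehat{H}} Q_{\chi}, \qquad Q_{\chi} := \sum_{h \in H} \chi(h) a_{h},$$
in which each $Q_{\chi}$ has total degree $[G:H]$, because every $a_{h}$ is homogeneous of that degree. Totaling these contributions gives $|\widehat{H}| \cdot [G:H] = |H| \cdot [G:H] = |G|$, matching $\deg{\Theta(G)}$ as expected. Since $P_{\varphi}$ divides $\Theta(G) = \prod_{\chi \in \widehat{H}} Q_{\chi}$ in $R$ and $R$ is a UFD, irreducibility of $P_{\varphi}$ forces $P_{\varphi}$ to divide some single factor $Q_{\chi}$. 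Comparing total degrees then yields
$$\deg{\varphi} = \deg{P_{\varphi}} \leq \deg{Q_{\chi}} = [G:H],$$
which is exactly the desired inequality.

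The only step that is not internal to the paper's own setup is the appeal to irreducibility of $P_{\varphi}$ over $\mathbb{C}$; this is a classical fact routinely invoked alongside Theorem~\ref{thm:1.0.2} and is not a real obstacle. Everything else is a direct degree count from the two factorizations, so the corollary is essentially immediate once Theorem~\ref{thm:1.1.2} is in hand.
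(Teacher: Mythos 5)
Your argument is correct and is precisely the one the paper intends: Corollary~\ref{cor:6.1.7} is stated to follow ``from Theorem~\ref{thm:1.0.2} and \ref{thm:6.1.6}'', i.e.\ by comparing Frobenius's irreducible factorization with the new factorization into degree-$[G:H]$ factors and using unique factorization in $R$. Note that the irreducibility of each $\det\left(\sum_{g \in G} \varphi(g) x_{g}\right)$ need not be cited separately, since the paper's statement of Theorem~\ref{thm:1.0.2} already asserts that this is \emph{the irreducible} factorization of $\Theta(G)$.
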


In the last section, 
we define a conjugation of the group algebra of the group which has an index two abelian subgroup. 
The conjugation comes from the noncommutative determinant. 
By applying the conjugation, we arrive at an inverse formula of $2 \times 2$ matrix.

\section{\bf{Regular representation}}
Here, we describe the left regular representation of the group algebra and give two expressions for the representation. 
In addition, we show that a composition of regular representations is a regular representation.

Let $R$ be a commutative ring, 
$G$ a group, 
$H$ a subgroup of $G$ of finite index and $RG$ the group algebra of $G$ over $R$ whose elements are all possible finite sums of the form $\sum_{g \in G} a_{g} g$, where $a_{g} \in R$. 
We take a complete set $T = \{ t_{1}, t_{2}, \ldots, t_{[G:H]} \}$ of left coset representatives of $H$ in $G$, where $[G:H]$ is the index of $H$ in $G$.

\begin{definition}[Left regular representation]\label{def:2.1.1}
For all $A \in \Mat(m, RG)$, there exists a unique $L_{T}(A) \in \Mat(m [G:H], RH)$ such that 
$$
A (t_{1} I_{m} \: t_{2} I_{m} \: \cdots \: t_{[G:H]} I_{m}) = (t_{1} I_{m} \: t_{2} I_{m} \: \cdots \: t_{[G:H]} I_{m}) L_{T}(A). 
$$
We call the map $L_{T} : \Mat(m, R G) \ni A \mapsto L_{T}(A) \in \Mat(m [G:H], R H)$ the left regular representation from $\Mat(m, RG)$ to $\Mat(m [G:H], RH)$ with respect to $T$. 
\end{definition}

Obviously, $L_{T}$ is an injective $R$-algebra homomorphism. 

\begin{rei}\label{rei:2.1.2}
Let $G = \mathbb{Z}/ 2 \mathbb{Z} = \{ 0, 1 \}$ and 
$H = \{ 0 \}$ and $\alpha = x0 + y1 \in RG$. 
Then we have 
\begin{align*}
(x 0 + y 1) (0 \: 1) = (0 \: 1) 
\begin{bmatrix}
x 0 & y 0 \\ 
y 0 & x 0 
\end{bmatrix}. 
\end{align*}
\end{rei}

To give an expression for $L_{T}$, we define the map $\dot{\chi}$ by 
\begin{align*}
\dot{\chi}(g) = 
\begin{cases}
1 & g \in H, \\ 
0 & g \notin H 
\end{cases}
\end{align*}
for all $g \in G$ 
and we denote $(i, j)$ the $m \times m$ block element of an $(mn) \times (mn)$ matrix $M$ by $M_{(i, j)}$. 
We can now prove the following theorem.

\begin{lem}\label{lem:2.1.3}
Let $L_{T} : \Mat(m, RG) \rightarrow \Mat(m[G:H], RH)$ be the left regular representation with respect to $T$ and 
$A = \sum_{t \in T} t A_{t} \in \Mat(m, RG)$ where $A_{t} \in \Mat(m, RH)$. 
Then we have 
$$
L_{T}(A)_{(i, j)} = \sum_{t \in T} \dot{\chi}(t_{i}^{-1} t t_{j}) t_{i}^{-1} t A_{t} t_{j}. 
$$
\end{lem}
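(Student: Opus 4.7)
The plan is to prove the identity by direct verification of the defining equation of $L_T$. Comparing the $j$-th block column on each side of the relation $A(t_1 I_m \: \cdots \: t_{[G:H]} I_m) = (t_1 I_m \: \cdots \: t_{[G:H]} I_m) L_T(A)$ shows that the lemma reduces to establishing
\begin{equation*}
A t_j \;=\; \sum_{i=1}^{[G:H]} t_i \sum_{t \in T} \dot\chi(t_i^{-1} t t_j)\, t_i^{-1} t A_t t_j
\end{equation*}
in $\Mat(m, RG)$ for each $j$; the uniqueness built into Definition~\ref{def:2.1.1} then forces the block $L_T(A)_{(i,j)}$ to coincide with the inner sum.

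First I would substitute $A = \sum_{t \in T} t A_t$ to rewrite the left-hand side as $\sum_{t \in T} t A_t t_j$. The next step is the coset decomposition: for each $t \in T$, the element $tt_j$ lies in a unique left coset $t_{\sigma(t)} H$, so $\dot\chi(t_i^{-1} t t_j)$ vanishes unless $i = \sigma(t)$. For this unique index, the factor $t_i \cdot (t_i^{-1} t A_t t_j)$ telescopes to $t A_t t_j$, so the inner sum over $i$ on the right-hand side collapses to $t A_t t_j$ for each fixed $t$. Summing over $t \in T$ recovers the left-hand side, which closes the chain of equalities.

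The main obstacle is verifying that each summand $t_i^{-1} t A_t t_j$ actually lies in $\Mat(m, RH)$ whenever $\dot\chi(t_i^{-1} t t_j) = 1$, so that the right-hand side is a legitimate element of $\Mat(m[G:H], RH)$ rather than just a formal expression in $\Mat(m[G:H], RG)$. Writing $t_i^{-1} t t_j = h \in H$, one has $t_i^{-1} t A_t t_j = h\, t_j^{-1} A_t t_j$, which reduces the issue to whether conjugation by $t_j$ sends entries of $A_t$ back into $RH$. This is the delicate point where the placement of the subgroup $H$ inside $G$ enters, and it controls how the formula interacts with the non-commutativity of the group algebra; once it is in hand, the computation above finishes the proof.
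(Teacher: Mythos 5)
Your computation is exactly the one the paper performs: multiply the row vector $(t_{1}I_{m}\ \cdots\ t_{[G:H]}I_{m})$ by the candidate matrix, use that for fixed $t$ and $j$ exactly one index $i$ satisfies $tt_{j}\in t_{i}H$, and watch the double sum telescope to $\sum_{t\in T}tA_{t}t_{j}=At_{j}$. The problem is your closing paragraph: you correctly observe that one must still check that each block $\dot{\chi}(t_{i}^{-1}tt_{j})\,t_{i}^{-1}tA_{t}t_{j}$ lies in $\Mat(m,RH)$, you correctly reduce this to whether $t_{j}^{-1}A_{t}t_{j}$ has entries in $RH$, and then you stop, saying the proof is finished ``once it is in hand.'' It is not in hand, and it is not a formality: the uniqueness in Definition~\ref{def:2.1.1} quantifies only over matrices in $\Mat(m[G:H],RH)$, so verifying the intertwining identity does not by itself identify your candidate with $L_{T}(A)$ --- a matrix with entries merely in $RG$ can satisfy the same identity without being $L_{T}(A)$.

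Worse, the membership you need actually fails when $H$ is not normal, so the gap cannot be closed as the lemma is stated. Take $G=S_{3}$, $H=\{e,(12)\}$, $T=\{e,(13),(23)\}$, $m=1$, and $A=(13)\cdot(12)$, so that $A_{(13)}=(12)$ and the other $A_{t}$ vanish. The formula gives $L_{T}(A)_{(1,2)}=\dot{\chi}\bigl((13)(13)\bigr)\,(13)(12)(13)=(23)\notin RH$, whereas $At_{2}=(13)(12)(13)=(23)=t_{3}\cdot e$, so the true column $2$ has its only nonzero block in row $3$ (equal to $e$) and $L_{T}(A)_{(1,2)}=0$. Conjugation by $t_{j}$ preserves $RH$ exactly when $t_{j}$ normalizes $H$, and since $T$ meets every coset this means the formula is valid precisely when $H$ is normal in $G$. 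So the ``delicate point'' you flagged is a genuinely missing hypothesis; the paper's own proof is silent on it and has the same defect, though every subsequent use of the lemma (Lemma~\ref{lem:2.1.4}, Lemma~\ref{lem:5.1.2}, the matrix form of $L_{G}$) happens to take $H$ normal. To finish your argument, either add the normality assumption and note that then $t_{i}^{-1}tA_{t}t_{j}=(t_{i}^{-1}tt_{j})(t_{j}^{-1}A_{t}t_{j})\in\Mat(m,RH)$, or derive the blocks directly from the unique decomposition of $At_{j}$ over the cosets $t_{i}H$ instead of from this closed formula.
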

\begin{proof}
Let $r = [G:H]$. 
Then we have 
\begin{align*}
&( t_{1}I_{m} \: t_{2}I_{m} \: \cdots \: t_{r}I_{m}) \left( \sum_{t \in T} \dot{\chi} \left( t_{i}^{-1} t t_{j} \right) t_{i}^{-1} t A_{t} t_{j} \right)_{1 \leq i \leq r, 1 \leq j \leq r} \\ 
&= \left( 
\sum_{i = 1}^{r} \sum_{t \in T} \dot{\chi} ( t_{i}^{-1} t t_{1} ) t A_{t} t_{1} \: 
\sum_{i = 1}^{r} \sum_{t \in T} \dot{\chi} ( t_{i}^{-1} t t_{2} ) t A_{t} t_{2} \: 
\cdots \: 
\sum_{i = 1}^{r} \sum_{t \in T} \dot{\chi} ( t_{i}^{-1} t t_{m} ) t A_{t} t_{r} 
\right) \\ 
&= \left( \sum_{t \in T} t A_{t} \right) ( t_{1}I_{m} \: t_{2}I_{m} \: \cdots \: t_{r}I_{m}). 
\end{align*}
This completes the proof. 
\end{proof}

To get an another expression for $L_{T}$ when $H$ is a normal subgroup of $G$, 
we recall the Kronecker product. 
Let $A = (a_{i j})_{1 \leq i \leq m_{1}, 1 \leq j \leq n_{1}}$ be an $m_{1} \times n_{1}$ matrix and $B = (b_{i j})_{1 \leq i \leq m_{2}, 1 \leq j \leq n_{2}}$ be an $m_{2} \times n_{2}$ matrix. 
The Kronecker product $A \otimes B$ is the $(m_{1} m_{2}) \times (n_{1} n_{2})$ matrix, 
$$
A \otimes B = 
\begin{bmatrix} 
a_{11} B & a_{12} B & \cdots & a_{1 n_{1}} B \\ 
a_{21} B & a_{22} B & \cdots & a_{2 n_{1}} B \\ 
\vdots & \vdots & \ddots & \vdots \\ 
a_{m_{1} 1} B & a_{m_{1} 2} B & \cdots & a_{m_{1} n_{1}} B
\end{bmatrix}. 
$$

If $G = \{ g_{1}, g_{2}, \ldots, g_{|G|} \}$ is a finite group. 
Then the restriction of the left regular representation $L_{G} : G \rightarrow \Mat(|G|, R \{e \})$ with respect $G$ is 
$$
L_{G}(g)_{ij} = \dot{\chi}(g_{i}^{-1} g g_{j}) e 
$$
from Lemma~$\ref{lem:2.1.3}$. 
We often assume that $R\{e \} = R$, that $e = 1 \in R$. 
So, we can see that $L_{G}$ is a matrix form of the left regular representation of the group $G$.

Let \begin{align*}
P 
&= \begin{bmatrix} 
t_{1} I_{m} & & & \\ 
 & t_{2} I_{m} & & \\ 
 & & \ddots & \\ 
 & & & t_{[G:H]} I_{m} \\ 
\end{bmatrix}. 
\end{align*}
Thus, we have the follwing lemma.

\begin{lem}\label{lem:2.1.4}
Let $H$ be a normal subgroup of $G$, 
$L_{T}$ the left regular representation from $\Mat(m, RG)$ to $\Mat(m[G:H], RH)$ with respect to $T$, 
$L_{G/H}$ the left regular representation from $R(G/H)$ to $\Mat(|G/H|, R\{ eH \})$ with respect to $G/H$ and 
$A = \sum_{t \in T} t A_{t} \in \Mat(m, RG)$
where $A_{t} \in \Mat(m, RH)$. 
Accordingly, we have 
$$
L_{T}(A) = P^{-1} \left( \sum_{t \in T} L_{G/H}(tH) \otimes t A_{t} \right) P. 
$$
\end{lem}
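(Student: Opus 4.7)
The plan is to compare the two sides block-by-block, applying Lemma~\ref{lem:2.1.3} twice: once to $L_T(A)$ directly, and once to $L_{G/H}$ in order to identify the entries of the $[G:H]\times[G:H]$ scalar matrix $L_{G/H}(tH)$. First I would apply Lemma~\ref{lem:2.1.3} to $L_T(A)$. Since $A = \sum_{t\in T} t A_t$ with $A_t \in \Mat(m, RH)$, the lemma gives
$$
L_T(A)_{(i,j)} = \sum_{t \in T} \dot\chi(t_i^{-1} t t_j)\, t_i^{-1} t A_t t_j,
$$
so the task reduces to showing that the $(i,j)$ block of $P^{-1}\bigl(\sum_{t\in T} L_{G/H}(tH)\otimes tA_t\bigr)P$ equals this sum.

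Next I would compute that block. By the definition of the Kronecker product, the $(i,j)$ block of $L_{G/H}(tH)\otimes tA_t$ is the scalar $L_{G/H}(tH)_{ij}$ times $tA_t$. Applying Lemma~\ref{lem:2.1.3} a second time, now with $G$ replaced by $G/H$, $H$ replaced by the trivial subgroup $\{eH\}$, and coset representatives $\{t_1H,\ldots,t_{[G:H]}H\}$, yields
$$
L_{G/H}(tH)_{ij} = \dot\chi\bigl((t_iH)^{-1}(tH)(t_jH)\bigr)\,eH = \dot\chi(t_i^{-1} t t_j)\,eH,
$$
where normality of $H$ is used so that the coset product collapses as $(t_iH)(tH)(t_jH)=t_i^{-1}tt_j H$, and the result is read as a scalar under the identification $R\{eH\}\cong R$. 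Because $P$ is block-diagonal with $(j,j)$-block $t_j I_m$ while $P^{-1}$ has $(i,i)$-block $t_i^{-1} I_m$, conjugation by $P$ multiplies the $(i,j)$ block on the left by $t_i^{-1}$ and on the right by $t_j$. Summing over $t\in T$ then reproduces exactly the formula for $L_T(A)_{(i,j)}$ obtained in the first step.

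The only conceptually nontrivial point — and where normality really earns its keep — is verifying that the claimed right-hand side takes values in $\Mat(m[G:H], RH)$ rather than merely in $\Mat(m[G:H], RG)$. Whenever $\dot\chi(t_i^{-1}tt_j)\neq 0$, the product $t_i^{-1}tt_j$ lies in $H$, and one can factor $t_i^{-1}tA_tt_j = (t_i^{-1}tt_j)(t_j^{-1}A_tt_j)$, where $t_j^{-1}A_tt_j\in\Mat(m,RH)$ via the inner-automorphism $h\mapsto t_j^{-1} h t_j$ of $H$. Apart from this sanity check, the main obstacle is purely notational: keeping straight the two roles of $\dot\chi$ (one for $H\subset G$, the other for $\{eH\}\subset G/H$) and carefully distinguishing scalar entries from $m\times m$ block entries throughout.
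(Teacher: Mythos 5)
Your proposal is correct and follows essentially the same route as the paper: compute the $(i,j)$ block of $P^{-1}\left(\sum_{t\in T}L_{G/H}(tH)\otimes tA_t\right)P$ via the Kronecker product and the formula $L_{G/H}(tH)_{ij}=\dot\chi(t_i^{-1}tt_j)$, then match it against the expression for $L_T(A)_{(i,j)}$ from Lemma~\ref{lem:2.1.3}. Your added check that the right-hand side actually lands in $\Mat(m[G:H],RH)$ is a worthwhile remark the paper leaves implicit, but it does not change the argument.
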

\begin{proof}
From Lemma~$\ref{lem:2.1.3}$, we have 
\begin{align*}
\left( P^{-1} \left( \sum_{t \in T} L_{G/H} \otimes t A_{t} \right) P \right)_{(i, j)} 
&= t_{i}^{-1} I_{m} \left( \sum_{t \in T} \left(L_{G/H}(tH) \right)_{ij} t A_{t} \right) t_{j} I_{m} \\ 
&= \sum_{t \in T} \dot{\chi}(t_{i}^{-1} t t_{j}) t_{i}^{-1} t A_{t} t_{j} \\ 
&= L_{T}(A)_{(i, j)}. 
\end{align*}
This completes the proof. 
\end{proof}

We now show that a composition of regular representations is a regular representation. 
Theorem~$\ref{thm:6.1.5}$ requires the following lemma.

\begin{lem}\label{lem:2.1.5}
Let $K \subset H \subset G$ be a sequence of groups, 
$G = t_{1}H \cup t_{2}H \cup \cdots \cup t_{[G:H]}$, 
$H = u_{1}K \cup u_{2}K \cup \cdots \cup u_{[H:K]}$, 
$L_{T} : \Mat(m, RG) \rightarrow \Mat(m[G:H], RH)$ the representation with respect to $T$ and 
$L_{U} : \Mat(m[G:H], RH) \rightarrow \Mat(m[G:K], R K)$ the representation with respect to $U$. 
Then there exists a unique representation $L_{V}$ from $\Mat(m, RG)$ to $\Mat(m[G:K], R \{ e \})$ with respect to $V$ such that 
$$
L_{V} = L_{U} \circ L_{T}
$$
where $V = \{ v_{1}, v_{2}, \ldots, v_{[G:K]} \}$ is a complete set of the left coset representatives of $K$ in $G$ 
\end{lem}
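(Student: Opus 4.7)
The plan is to read off the composition $L_U \circ L_T$ from the defining matrix equation by stacking the two coset systems into a single one. Writing $r = [G:H]$, $s = [H:K]$, I set
\[
T_1 = (t_1 I_m\ t_2 I_m\ \cdots\ t_r I_m), \qquad U_1 = (u_1 I_{mr}\ u_2 I_{mr}\ \cdots\ u_s I_{mr}),
\]
so that by Definition~\ref{def:2.1.1} we have $A T_1 = T_1 L_T(A)$ and $L_T(A) U_1 = U_1 L_U(L_T(A))$; chaining these yields $A (T_1 U_1) = (T_1 U_1) L_U(L_T(A))$.

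Next I would make the candidate $V$ explicit and check that it is a complete set of left coset representatives of $K$ in $G$. Order $V := \{t_i u_j : 1 \le i \le r,\ 1 \le j \le s\}$ by $v_{(j-1)r + i} := t_i u_j$. For any $g \in G$, there is a unique $i$ with $g \in t_i H$, so $g = t_i h$ with $h \in H$, and then a unique $j$ with $h \in u_j K$, so $g \in (t_i u_j)K$; since $|V| = rs = [G:K]$, this exhausts the cosets.

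I would then compute $T_1 U_1$ directly. Because $u_j I_{mr}$ is a scalar block matrix with $u_j$ on the diagonal, $T_1 \cdot (u_j I_{mr}) = (t_1 u_j I_m\ t_2 u_j I_m\ \cdots\ t_r u_j I_m)$, and concatenating these row segments for $j = 1, \ldots, s$ produces exactly $V_1 := (v_1 I_m\ v_2 I_m\ \cdots\ v_{rs} I_m)$ in the chosen ordering. Substituting $T_1 U_1 = V_1$ into the chained equation gives $A V_1 = V_1 L_U(L_T(A))$. Since $L_U(L_T(A)) \in \Mat(m[G:K], RK)$ and $V$ is a complete set of left coset representatives, Definition~\ref{def:2.1.1} asserts that there is a unique matrix in $\Mat(m[G:K], RK)$ satisfying this equation, and that matrix is by definition $L_V(A)$. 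Hence $L_V(A) = L_U(L_T(A))$, which simultaneously establishes the existence of $L_V$ and the identity $L_V = L_U \circ L_T$; uniqueness of $L_V$ is immediate from Definition~\ref{def:2.1.1}.

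The proof is essentially bookkeeping, and the only nontrivial point is selecting the ordering of $V$ so that it matches the block stacking arising from $T_1 U_1$; once this ordering is fixed, the identity $T_1 U_1 = V_1$ follows from the fact that scalar multiples of the identity commute past block rows, and the remainder of the argument is just an invocation of Definition~\ref{def:2.1.1}.
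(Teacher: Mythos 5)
Your proof is correct and takes essentially the same route as the paper: both reduce to the single identity $A\,(t_{1}u_{1}I_{m}\ \cdots\ t_{r}u_{s}I_{m}) = (t_{1}u_{1}I_{m}\ \cdots\ t_{r}u_{s}I_{m})\,L_{U}(L_{T}(A))$ with $V=\{t_{i}u_{j}\}$ as the combined set of coset representatives, and then invoke the uniqueness in Definition~\ref{def:2.1.1}. The only difference is presentational --- you get that identity in one step by right-multiplying $AT_{1}=T_{1}L_{T}(A)$ by $U_{1}$ and using associativity together with $T_{1}U_{1}=V_{1}$, whereas the paper verifies the same identity by an explicit blockwise computation of $(At_{i})u_{j}$ in terms of the entries of $L_{T}(A)$ and $L_{U}(L_{T}(A))$; your bookkeeping is the cleaner of the two.
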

\begin{proof}
Let $A \in \Mat(m, RG), r = [G:H]$ and $s = [H:K]$. 
By definition, we have 
\begin{align*}
A (t_{1}I_{m} \: t_{2}I_{m} \: \cdots \: t_{r}I_{m}) &= (t_{1}I_{m} \: t_{2}I_{m} \: \cdots \: t_{r}I_{m}) L_{T}(A), \\ 
L_{T}(A) (u_{1} I_{mr} \: u_{2} I_{mr} \: \cdots \: u_{s} I_{mr}) &= (u_{1} I_{mr} \: u_{2} I_{mr} \: \cdots \: u_{s} I_{mr}) L_{U}(L_{T}(A)). 
\end{align*}
Let $(a_{ij})_{1 \leq i \leq r, 1 \leq j \leq r} = L_{T}(A)$ and $(b_{ij})_{1 \leq i \leq s, 1 \leq j \leq s} = L_{U}(L_{T}(A))$ 
where $a_{ij} \in \Mat \left(m, RH \right)$ and $b_{ij} \in \Mat\left(mr, RH \right)$. 
Then we have 
$$
(a_{ij})_{1 \leq i \leq r, 1 \leq j \leq r} u_{p} I_{mr} = \sum_{q = 1}^{s} u_{q} b_{qp}. 
$$
We obtain 
$$
a_{ij} u_{p} = \sum_{q = 1}^{s} u_{q} (b_{qp})_{ij}. 
$$
Therefore, we have 
\begin{align*}
(A t_{i}) u_{j} 
&= \left( \sum_{p = 1}^{r} t_{p} a_{p i} \right) u_{j} \\ 
&= \sum_{p = 1}^{r} t_{p} (a_{pi} u_{j}) \\ 
&= \sum_{p = 1}^{r} t_{p} \left( \sum_{q = 1}^{s} u_{q} (b_{qj})_{pi} \right) \\ 
&= \sum_{p = 1}^{r} \sum_{q = 1}^{s} t_{p} u_{q} (b_{qj})_{pi}. 
\end{align*}
On the other hand, 
obviously $V = \left\{ t_{p} u_{q} \: \vert \: 1 \leq p \leq r, 1 \leq q \leq s \right\}$ is obviously a complete set of left coset representatives of $K$ in $G$. 
From $A t_{i} u_{j} = \sum_{p = 1}^{r} \sum_{q = 1}^{s} t_{p} u_{q} (b_{qj})_{pi}$, we have 
\begin{align*}
&A (t_{1}u_{1}I_{m} \: \cdots \: t_{r}u_{1}I_{m} \: t_{1}u_{2} I_{m} \: \cdots \: t_{r} u_{2} I_{m} \: \cdots \: t_{r} u_{s} ) \\ 
&\quad = (t_{1}u_{1}I_{m} \: \cdots \: t_{r}u_{1}I_{m} \: t_{1}u_{2} I_{m} \: \cdots \: t_{r} u_{2} I_{m} \: \cdots \: t_{r} u_{s} ) L_{U}(L_{T}(A)). 
\end{align*}
This completes the proof. 
\end{proof}

\section{\bf{Characteristics of image of representation when quotient group is abelian}}
In the case of $G/H$ is a finite abelian group, we have 
$$
L_{T} \left( \Mat(m, RG) \right) = \left\{ L_{T}(A) \: \vert \: A \in \Mat(m ,RG) \right\}. 
$$
We define $J_{t}$ by 
$$
J_{t} = P^{-1} \left( L_{G/H}(tH) \otimes I_{m} \right) P
$$
for all $t \in T$. 
The following lemma will be used to show that $B \in \Mat(m[G:H], RH)$ is an image of $L_{T}$ if and only if $B$ commutes with $J_{t}$.

\begin{lem}\label{lem:3.1.1}
Let $G/H$ be a finite abelian group and $L_{T}$ be the left regular representation from $\Mat(m, RG)$ to $\Mat(m[G:H], RH)$ with respect to $T$. 
Then, the elements of $L_{T}(\Mat(m, RG))$ and $J_{t}$ for all $t \in T$ are commutative. 
\end{lem}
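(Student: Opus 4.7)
The plan is to reduce the commutativity claim to a statement about Kronecker products via the factorization given in Lemma~$\ref{lem:2.1.4}$. Writing $A = \sum_{s \in T} s A_{s} \in \Mat(m, RG)$ with $A_{s} \in \Mat(m, RH)$, that lemma expresses
\[
L_{T}(A) = P^{-1}\!\left( \sum_{s \in T} L_{G/H}(sH) \otimes s A_{s} \right) P,
\]
while $J_{t} = P^{-1}(L_{G/H}(tH) \otimes I_{m}) P$ is of the same shape (with $A_{t} = I_{m}$ formally). Note that for the quotient $G/H$ to make sense as a group, $H$ must be normal in $G$, so Lemma~$\ref{lem:2.1.4}$ is indeed applicable.

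After conjugating both sides by $P$, it suffices to verify
\[
\left(\sum_{s \in T} L_{G/H}(sH) \otimes s A_{s}\right)\bigl(L_{G/H}(tH) \otimes I_{m}\bigr)
= \bigl(L_{G/H}(tH) \otimes I_{m}\bigr)\left(\sum_{s \in T} L_{G/H}(sH) \otimes s A_{s}\right).
\]
I would expand both sides using the mixed-product identity $(X \otimes Y)(Z \otimes W) = (XZ) \otimes (YW)$. The right-hand block of each Kronecker product reduces to $s A_{s} \cdot I_{m} = I_{m} \cdot s A_{s} = s A_{s}$, so the two sides differ only in the order of $L_{G/H}(sH)$ and $L_{G/H}(tH)$ in the left factor.

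Finally, I would invoke the hypothesis that $G/H$ is abelian: since $L_{G/H}$ is a homomorphism on the commutative group algebra $R(G/H)$, we have $L_{G/H}(sH) L_{G/H}(tH) = L_{G/H}(tH) L_{G/H}(sH)$ for every $s, t \in T$. Substituting this equality term by term collapses both sides to the same expression, completing the proof. No real obstacle arises here; the only point requiring care is keeping track of the Kronecker factor structure so that the abelianness of $G/H$ is applied on the correct factor.
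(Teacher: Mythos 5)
Your proposal is correct and follows essentially the same route as the paper: both invoke Lemma~\ref{lem:2.1.4} to write $L_{T}(A)$ as $P^{-1}\left(\sum_{s} L_{G/H}(sH)\otimes sA_{s}\right)P$, apply the mixed-product property of the Kronecker product, and then use commutativity of $G/H$ (the paper phrases this as $L_{G/H}(tt'H)=L_{G/H}(t'tH)$, which is the same point as your commutation of the $L_{G/H}$ factors). No gaps.
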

\begin{proof}
Suppose $A = \sum_{t \in T} t A_{t} \in \Mat(m, RG)$ where $A_{t} \in \Mat(m, RH)$. 
From Lemma~$\ref{lem:2.1.4}$, we have $L_{T}(A) = P^{-1} \left( \sum_{t \in T} L_{G/H}(tH) \otimes tA_{t} \right)$. 
Therefore, we have 
\begin{align*}
L_{T}(A) J_{t'} 
&= P^{-1} \left( \sum_{t \in T} L_{G/H}(tH) \otimes tA_{t} \right) P P^{-1} \left( L(t'H) \otimes I_{m} \right) P \\ 
&= P^{-1} \left( \sum_{t \in T} L_{G/H}(tt'H) \otimes tA_{t} \right) P \\ 
&= P^{-1} \left( \sum_{t \in T} L_{G/H}(t'tH) \otimes tA_{t} \right) P \\ 
&= J_{t'} L_{T}(A)
\end{align*}
for all $t' \in T$. 
This completes the proof. 
\end{proof}

Now we are in a position to prove the following theorem.

\begin{thm}\label{thm:3.1.2}
Let $G/H$ be a finite abelian group and $L_{T}$ the left regular representation from $\Mat(m, RG)$ to $\Mat(m[G:H], RH)$ with respect to $T$. 
We have 
$$
L_{T}(\Mat(m, RG)) = \left\{ B \in \Mat(m[G:H], RH) \: \vert \: J_{t} B = B J_{t}, t \in T \right\}. 
$$
\end{thm}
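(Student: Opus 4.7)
The plan is to prove the nontrivial inclusion $\supseteq$, the reverse inclusion being Lemma~\ref{lem:3.1.1}. So suppose $B \in \Mat(m[G:H], RH)$ commutes with every $J_t$; I want to construct $A = \sum_{t \in T} t A_t \in \Mat(m, RG)$ with each $A_t \in \Mat(m, RH)$ so that $L_T(A) = B$. First I would conjugate by $P$, setting $M := PBP^{-1}$ so that $M_{(i,j)} = t_i B_{(i,j)} t_j^{-1}$, and the commutation $J_t B = B J_t$ becomes
\[
(L_{G/H}(tH) \otimes I_m)\, M = M\, (L_{G/H}(tH) \otimes I_m).
\]
By Lemma~\ref{lem:2.1.4}, the goal is equivalent to exhibiting $A_t \in \Mat(m, RH)$ with $M = \sum_{t \in T} L_{G/H}(tH) \otimes t A_t$.

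Next I would unpack the fact that $L_{G/H}(tH)$ is the permutation matrix of the permutation $\sigma_t$ defined by $\sigma_t(j) = i \iff t_i H = t t_j H$. The commutation then rephrases block-wise as
\[
M_{(i,j)} = M_{(\sigma_s(i),\, \sigma_s(j))} \quad \text{for all } i, j \text{ and all } s \in T.
\]
Because $G/H$ is abelian, $s(t_i t_j^{-1}) s^{-1} H = t_i t_j^{-1} H$, so the diagonal orbit of $(i,j)$ under $\{\sigma_s\}_{s \in T}$ preserves $t_i t_j^{-1} H \in G/H$; moreover, given any two pairs $(i,j),(i',j')$ with the same value of $t_i t_j^{-1} H$, choosing $s$ in the coset $t_{j'} t_j^{-1} H$ produces $\sigma_s(j) = j'$ and $\sigma_s(i) = i'$. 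Hence $M_{(i,j)}$ depends only on the coset $t_i t_j^{-1} H$.

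Finally, for each $t \in T$ I would pick any pair $(i,j)$ with $t_i t_j^{-1} \in tH$ and set $A_t := t^{-1} M_{(i,j)}$. Normality of $H$ (forced by the existence of $G/H$) guarantees $A_t \in \Mat(m, RH)$, since the entries of $t^{-1} t_i B_{(i,j)} t_j^{-1}$ lie in $R \cdot t^{-1}(t_i t_j^{-1}) H = RH$. A direct substitution into Lemma~\ref{lem:2.1.3} then yields $L_T(\sum_t t A_t)_{(i,j)} = t_i^{-1} t A_t t_j = B_{(i,j)}$. The principal obstacle is the transitivity/orbit analysis in the middle paragraph: it is precisely there that abelianness of $G/H$ is indispensable, since without it $M_{(i,j)}$ would be constrained only by a weaker equivalence than ``same coset $t_i t_j^{-1} H$'' and the reconstruction of $A$ would break down.
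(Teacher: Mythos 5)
Your proof is correct, but it takes a genuinely different route from the paper's. You reconstruct the preimage explicitly: conjugating by $P$ turns commutation with the $J_t$ into invariance of the blocks $M_{(i,j)}$ under the diagonal permutation action $(i,j)\mapsto(\sigma_s(i),\sigma_s(j))$, the abelianness of $G/H$ identifies the orbits with the cosets $t_it_j^{-1}H$, and you read off $A_t = t^{-1}M_{(i,j)}$ from any representative pair. The paper instead argues by elimination: it first writes an arbitrary $B$ as $L_T(A)+B'$ where $B'$ has vanishing first block-column (possible because the first block-column of $L_T(A)$ is unconstrained as $A$ varies), notes $B'$ still commutes with every $J_t$, and then uses the particular $J_t$ whose first block-column has its unique nonzero block in position $(p,1)$ to force $B'_{(q,p)}=0$ for all $q$ and all $p\ge 2$, whence $B'=0$. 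The paper's argument is shorter and needs only the shape of the first block-column of the $J_t$; yours is longer but more informative, since it exhibits a concrete formula for $A$ in terms of $B$ and isolates exactly where commutativity of $G/H$ enters (the transitivity of the diagonal action on each coset class), which the paper leaves implicit in the decomposition step. Both arguments use normality of $H$ in the same essential way, and I see no gap in yours.
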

\begin{proof}
From Lemma~$\ref{lem:3.1.1}$, we have 
$$
L_{T}(\Mat(m, RG)) \subset \left\{ B \in \Mat(m[G:H], RH) \: \vert \: J_{t} B = B J_{t}, t \in T \right\}. 
$$
We will show that 
$$
\left\{ B \in \Mat(m[G:H], RH) \: \vert \: J_{t} B = B J_{t}, t \in T \right\} \subset L_{T}(\Mat(m, RG)). 
$$
For all $B \in \Mat(m[G:H], RH)$, there exists $A \in \Mat(m, RG)$ and $B_{ij} \in \Mat(m, RH)$ such that 
\begin{align*}
B = L_{T}(A) + B'
\end{align*}
where 
$$
B' = \begin{bmatrix} 
0 & B_{12} & B_{13} & \cdots & B_{1 [G:H]} \\ 
0 & B_{22} & B_{23} & \cdots & B_{2 [G:H]} \\ 
\vdots & \vdots & \vdots & \ddots & \vdots \\ 
0 & B_{[G:H] 2} & B_{[G:H] 3} & \cdots & B_{[G:H] [G:H]} \\ 
\end{bmatrix}. 
$$
From Lemma~$\ref{lem:3.1.1}$, we have $B' J_{t} = J_{t} B'$. 
For all $p \in \{ 2, 3, \ldots, [G:H] \}$, there exists $t \in T$ such that ${J_{t}}_{(p, 1)} = t_{p} I_{m} t_{1}^{-1}$ and ${J_{t}}_{(i, 1)} = 0$ for all $i \neq p$. 
Therefore, we have 
\begin{align*}
B_{qp} t_{p} I_{m} t_{1}^{-1} 
&= (B' J_{t})_{(q, 1)} \\ 
&= (J_{t} B')_{(q, 1)} \\ 
&= 0
\end{align*}
for all $q \in \{ 1, 2, \ldots, [G:H] \}$. 
Thus, we have $B = L_{T}(A) \in L_{T}(\Mat(m, RG))$. 
This completes the proof. 
\end{proof}

Theorem~$\ref{thm:3.1.2}$ is similar to a property of a left regular representation of quaternion. 
Let $\mathbb{H}$ be the quaternions, $C + j D \in \Mat(m, \mathbb{H})$, where $C, D \in \Mat(m, \mathbb{C})$, and $\overline{C}$ the complex conjugation matrix of $C$. 
Then we have $(C + j D) (I_{m} \quad j I_{m}) = (I_{m} \quad j I_{m}) \psi(C + j D)$, where 
\begin{align*}
\psi(C + j D) = 
\begin{bmatrix} 
C & - \overline{D} \\ 
D & - \overline{C} 
\end{bmatrix}. 
\end{align*}
Hence, $\psi : \Mat(m, \mathbb{H}) \ni C + j D \mapsto \psi(C + j D) \in \Mat(2m, \mathbb{C})$ is a left regular representation. 
The following is known for the image of $\psi$ \cite{A}. 
$$
\psi(\Mat(m, \mathbb{H})) = \left\{ B \in \Mat(2m, \mathbb{C}) \: \vert \: JB = \overline{B} J \right\}
$$
where
$$
J = 
\begin{bmatrix}
0 & - I_{m} \\ 
I_{m} & 0 
\end{bmatrix}. 
$$

\section{\bf{Noncommutative determinant and some properties}}
In this section, 
we give a noncommutative determinant and describe its properties. 
This determinant is analogous to the Study determinant. 
Hence, we will define the determinant by using the regular representation of rings (group algebra).

Before defining the noncommutative determinant, 
we should remark that $A \in \Mat(m, RG)$ is invertible. 
Let $H$ be an abelian group.

\begin{lem}[Invertibility]\label{def:4.1.1}
For all $A, B \in \Mat(m, RG),\ AB = I_{m}$ if and only if $BA = I_{m}$. 
\end{lem}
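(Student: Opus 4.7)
The plan is to transport the problem from $\Mat(m,RG)$, which is noncommutative in general, into $\Mat(m[G:H],RH)$, a matrix algebra over the \emph{commutative} ring $RH$, via the left regular representation $L_{T}$ of Section~2. Since $H$ is abelian, $RH$ is commutative, so classical adjugate/determinant machinery is available in the target.

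First I would apply $L_{T}:\Mat(m,RG)\to\Mat(m[G:H],RH)$. As noted after Definition~\ref{def:2.1.1}, $L_{T}$ is an injective $R$-algebra homomorphism sending $I_{m}$ to $I_{m[G:H]}$. Hence the hypothesis $AB=I_{m}$ translates to $L_{T}(A)L_{T}(B)=I_{m[G:H]}$ in $\Mat(m[G:H],RH)$.

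Next I would invoke the standard fact that a one-sided inverse of a square matrix over a commutative ring $S$ is automatically two-sided: if $M,N\in\Mat(n,S)$ satisfy $MN=I_{n}$, then $\det(M)\det(N)=1$ makes $\det(M)$ a unit of $S$, and the adjugate identity $M\cdot\mathrm{adj}(M)=\mathrm{adj}(M)\cdot M=\det(M)I_{n}$ produces a genuine two-sided inverse $M^{-1}=\det(M)^{-1}\mathrm{adj}(M)$; uniqueness forces $N=M^{-1}$, so $NM=I_{n}$. Applied with $S=RH$, $M=L_{T}(A)$, $N=L_{T}(B)$, this yields $L_{T}(B)L_{T}(A)=I_{m[G:H]}$, i.e., $L_{T}(BA)=L_{T}(I_{m})$. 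Injectivity of $L_{T}$ then gives $BA=I_{m}$, and the reverse implication is obtained by interchanging $A$ and $B$.

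I do not expect a real obstacle: the only substantive step is the reduction to a commutative base ring through $L_{T}$, after which everything is classical commutative linear algebra. If one prefers to avoid invoking $L_{T}$, the same argument can be run directly inside $\Mat(m,RG)$ provided one first observes that $RG$ is commutative whenever $G$ is taken abelian in this section; but passing through $L_{T}$ is more in the spirit of the paper's ``noncommutative determinant'' framework and is the approach I would adopt.
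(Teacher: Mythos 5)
Your proposal is correct and follows essentially the same route as the paper: pass through $L_{T}$ into $\Mat(m[G:H],RH)$ with $RH$ commutative, use the fact that a one-sided inverse over a commutative ring is two-sided, and conclude by injectivity of $L_{T}$. You merely spell out the adjugate/determinant justification that the paper leaves implicit.
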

\begin{proof}
Let $AB = I_{m}$. 
We have $L_{T}(A) L_{T}(B) = I_{m [G : H]}$. 
The elements of $L_{T}(A)$ and the element of $L_{T}(B)$ are elements of a commutative ring. 
Hence, $L_{T}(B) L_{T}(A) = I_{m [G : H]}$. 
Therefore, $L_{T}(BA - I_{m}) = 0$. 
Since $L_{T}$ is an injective, we have $BA = I_{m}$. 
\end{proof}

Therefore, we do not have to distinguish between the left and right inverses.

The noncommutative determinant is as follows.

\begin{definition}\label{def:4.1.2}
Let $H$ be an abelian subgroup of $G$ and $L$ be a left regular representation from $\Mat(m, RG)$ to $\Mat(m[G:H], RH)$. 
We define the map $\Det : \Mat(m, RG) \rightarrow RH$ by 
$$
\Det = \det{} \circ L. 
$$
\end{definition}

Let $T' = \{ t'_{1}, t'_{2}, \ldots, t'_{m} \}$ be an another complete set of left coset representatives of $H$ in $G$. 
Then, there exists $Q \in \Mat(m, RH)$ such that $L_{T} = Q^{-1} L_{T'} Q$. 
Therefore, we have 
\begin{align*}
\Det 
&= \det{} \circ L_{T} \\ 
&= \det{} \circ L_{T'}. 
\end{align*}
Thus, $\Det$ is an invariant under change of the left regular representation; hence, $\Det$ is well-defined.

$\Det$ has the following properties.

\begin{thm}\label{thm:4.1.3}
For all $A, B \in \Mat(m, RG)$, 
\begin{enumerate}
\item $\Det(AB) = \Det(A) \Det(B)$. 
\item $A \in \Mat(m, RG)$ is invertible if and only if $\Det(A) \in RH$ is invertible. 
\end{enumerate}
\end{thm}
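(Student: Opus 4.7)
The plan is to treat (1) and the easy direction of (2) as direct consequences of multiplicativity, reserving the main work for the converse in (2). For (1), since $L_T$ is an $R$-algebra homomorphism (noted just after Definition~\ref{def:2.1.1}) and $RH$ is commutative because $H$ is abelian, the usual multiplicativity of $\det$ over a commutative base ring gives $\Det(AB) = \det(L_T(A)L_T(B)) = \det(L_T(A))\det(L_T(B)) = \Det(A)\Det(B)$. The forward direction of (2) then drops out: if $AB = I_m$, applying $\Det$ yields $\Det(A)\Det(B) = 1$, so $\Det(A)$ is a unit in $RH$.

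For the converse of (2), suppose $\Det(A) = \det(L_T(A))$ is a unit in $RH$. Since $RH$ is commutative, the classical adjugate formula produces an inverse $M = \Det(A)^{-1}\mathrm{adj}(L_T(A))$ for $L_T(A)$ inside the ambient matrix ring $\Mat(m[G:H], RH)$. The essential step is to show that this $M$ lies in the image of $L_T$; once a $B \in \Mat(m, RG)$ with $L_T(B) = M$ has been produced, the identity $L_T(AB) = L_T(A)M = I = L_T(I_m)$ combined with injectivity of $L_T$ forces $AB = I_m$, and Lemma~\ref{def:4.1.1} upgrades this to $BA = I_m$, so $A$ is invertible.

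To produce $B$, I would pass to the intrinsic module-theoretic picture. The free right $RH$-module $(RG)^m$ has rank $m[G:H]$ with basis $\{t_k e_i\}$, so $\Mat(m[G:H], RH) \cong \mathrm{End}_{RH}((RG)^m)$; under this identification, $L_T(A)$ corresponds to left multiplication by $A$, which is actually right $RG$-linear rather than merely right $RH$-linear. The subring $L_T(\Mat(m, RG))$ is then precisely the right $RG$-linear endomorphisms, since any such map is determined by its values on $e_1, \ldots, e_m$. The decisive observation is that the inverse of a right $RG$-linear bijection is automatically right $RG$-linear (a one-line check using right $RG$-linearity of $L_T(A)$), so $M$ comes from some $B \in \Mat(m, RG)$. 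The main obstacle is exactly this last identification: Theorem~\ref{thm:3.1.2} gives a clean commutator description of the image of $L_T$ only when $G/H$ is abelian, so in the present generality I would rely on the intrinsic characterization of $L_T(\Mat(m, RG))$ as the right $RG$-linear endomorphisms of $(RG)^m$.
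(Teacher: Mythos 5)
Your proof is correct, and item (1) together with the forward direction of (2) match the paper exactly (the paper simply notes that $\Det = \det \circ L_T$ is a composition of homomorphisms, and that $AB = I_m$ forces $L_T(A)$ to be invertible). Where you diverge is the converse direction of (2). The paper never tries to show that the inverse $M$ of $L_T(A)$ lies in the image of $L_T$: it takes any $B \in \Mat(m[G:H], RH)$ with $L_T(A)B = I_{m[G:H]}$, multiplies the defining intertwining relation $A\,(t_1 I_m \ \cdots\ t_{[G:H]} I_m) = (t_1 I_m \ \cdots\ t_{[G:H]} I_m)\,L_T(A)$ on the right by $B$, and reads off from any block column that $A C_j = t_j I_m$, i.e.\ $A(C_j t_j^{-1}) = I_m$, producing a right inverse of $A$ in $\Mat(m, RG)$ directly; the Invertibility Lemma then finishes. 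You instead identify $\Mat(m[G:H], RH) \cong \mathrm{End}_{RH}((RG)^m)$, characterize $L_T(\Mat(m, RG))$ as the right $RG$-linear endomorphisms, and observe that the inverse of a right $RG$-linear bijection is again right $RG$-linear, so $M = L_T(B)$ for some $B$. Both arguments are sound. Yours is more conceptual and explains \emph{why} the inverse must come from $\Mat(m, RG)$ (and you are right to note that Theorem~\ref{thm:3.1.2} is unavailable here since it assumes $G/H$ abelian); the paper's is shorter and sidesteps the image question entirely by extracting the inverse of $A$ from the intertwining relation itself, which is the only extra idea your write-up is missing.
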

\begin{proof}
$\Det$ is a homomorphism, because $L_{T}$ and $\det$ are homomorphisms. 
Therefore, the equation $(1)$ holds. 
Now let us prove $(2)$. 
If $A$ is invertible, there exists $B \in \Mat(m, RG)$ such that $A B = I_{m}$. 
Hence, $L_{T}(A) L_{T}(B) = I_{m [G : H]}, L_{T}(A)$ is invertible. 
If $\Det(A)$ is invertible, there exists $B \in \Mat(m [G : H], RH)$ such that $L_{T}(A) B = I_{m [G:H]}$. 
Therefore, 
$$
A (t_{1} I_{m} \: t_{2} I_{m} \: \cdots \: t_{[G:H]} I_{m}) B = (t_{1} I_{m} \: t_{2} I_{m} \: \cdots \: t_{[G:H]} I_{m}) I_{m [G : H]}. 
$$
Thus, $A$ is invertible. 
\end{proof}

Next let us define characteristic polynomial of $A \in \Mat(m, KG)$.

\begin{definition}[Characteristic polynomial]\label{def:4.1.4}
Let $H$ be an abelian group of $G$ and $L$ be a left regular representation from $\Mat(m, RG)$ to $\Mat(m[G:H], RH)$. 
For all $A \in \Mat(m, RG)$, we define $\Phi_{A}(X)$ by 
\begin{align*}
\Phi_{A}(X) 
&= \Det(X I_{m} - A) \\ 
&= \det(X I_{m [G:H]} - L_{T}(A)) 
\end{align*}
where $X$ is an independent variable such that $L_{T}(X B) = X L_{T}(B)$ and $\alpha X = X \alpha$ for any $B \in \Mat(m, RG)$ and $\alpha \in RH$. 
\end{definition}

We have the following lemma.

\begin{lem}\label{lem:4.1.5}
Let $H$ be a normal abelian subgroup of $G$ and $\Phi_{A}(X)$ the characteristic polynomial of $A$ over $RH$. 
Then we have $\Phi_{g^{-1} A g}(X) = \Phi_{A}(X)$ for all $g \in G$. 
\end{lem}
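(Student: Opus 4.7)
The plan is to translate the conjugation $A \mapsto g^{-1}Ag$ in $\Mat(m,RG)$ into conjugation of $L_T(A)$ inside $\Mat(m[G:H], RH)$, and then invoke multiplicativity of the ordinary determinant. First I would write $g^{-1}Ag = (g^{-1}I_m)\, A\, (gI_m)$ as a product in $\Mat(m,RG)$. Because $L_T$ is an injective $R$-algebra homomorphism (noted just after Definition~\ref{def:2.1.1}), applying it gives
$$
L_T(g^{-1}Ag) = L_T(g^{-1}I_m)\, L_T(A)\, L_T(gI_m),
$$
and the identity $(gI_m)(g^{-1}I_m) = I_m = (g^{-1}I_m)(gI_m)$ then forces $L_T(gI_m)$ and $L_T(g^{-1}I_m)$ to be two-sided inverses of one another inside $\Mat(m[G:H], RH)$.

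Since the indeterminate $X$ commutes with all entries by Definition~\ref{def:4.1.4}, this factorization lifts to
$$
XI_{m[G:H]} - L_T(g^{-1}Ag) = L_T(g^{-1}I_m)\bigl(XI_{m[G:H]} - L_T(A)\bigr)L_T(gI_m).
$$
Because $H$ is abelian, the ring $RH[X]$ is commutative, so the ordinary determinant is multiplicative on $\Mat(m[G:H], RH[X])$. Taking determinants on both sides and using $\det L_T(g^{-1}I_m)\cdot \det L_T(gI_m) = 1$ then yields $\Phi_{g^{-1}Ag}(X) = \Phi_A(X)$ directly.

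The main substantive check, and the one place where normality of $H$ is actually used, is to confirm that $L_T(gI_m)$ genuinely has entries in $RH$ so that its determinant lives in $RH[X]$ at all. Writing $g = t_k h$ for some $h\in H$ and applying Lemma~\ref{lem:2.1.3}, a typical nonzero block of $L_T(gI_m)$ takes the form $(t_i^{-1}t_k t_j)(t_j^{-1}h t_j)\cdot I_m$: the first factor lies in $H$ by the support condition encoded in $\dot{\chi}$, and the second factor lies in $H$ precisely because $H$ is normal. Once this is verified, the rest is essentially formal manipulation of matrix identities.
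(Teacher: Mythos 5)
Your proof is correct and takes essentially the same route as the paper: the paper's matrix $P$, defined by $g\,(t_{1}I_{m}\ \cdots\ t_{[G:H]}I_{m}) = (t_{1}I_{m}\ \cdots\ t_{[G:H]}I_{m})\,P$, is exactly your $L_{T}(gI_{m})$, and both arguments conclude by conjugation-invariance of the determinant over the commutative ring $RH[X]$. Your final check that $L_{T}(gI_{m})$ has entries in $RH$ is the same point the paper records in its remark that $P^{-1}L_{T}(A)P \in \Mat(m[G:H], RH)$ because $H$ is normal.
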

\begin{proof}
Since $f_{g} : G/H \ni t_{i} H \mapsto gt_{i} H \in G/H$ is a bijection for all $g \in G$, 
for all $g \in G$, there exists $P \in \Mat(m [G:H], RH)$ such that 
$$
g (t_{1} I_{m} \: t_{2} I_{m} \: \cdots \: t_{[G:H]} I_{m}) = (t_{1} I_{m} \: t_{2} I_{m} \: \cdots \: t_{[G:H]} I_{m}) P. 
$$
Therefore, we have 
\begin{align*}
\Phi_{g^{-1}Ag}(X) 
&= \det{(X I_{m [G:H]} - L_{T}(g^{-1} A g))} \\ 
&= \det{(X I_{m [G:H]} - P^{-1} L_{T}(A) P)} \\ 
&= \Phi_{A}(X). 
\end{align*}
Here, we should remark that $P^{-1} L_{T}(A) P \in \Mat(m[G:H], RH)$, since $H$ is a normal subgroup of $G$. 
This completes the proof. 
\end{proof}

We denote the center of the ring $R$ by $Z(R)$. 
The following corollary will be used in the proof of Theorem~$\ref{thm:6.1.5}$.

\begin{cor}\label{cor:4.1.6}
Let $H$ be a normal abelian subgroup of $G$ and 
$$
\Phi_{A}(X) = X^{m [G:H]} + a_{(m-1)[G:H]} X^{(m-1)[G:H]} + \cdots + a_{0}
$$
the characteristic polynomial of $A$ over $RH$. 
Then we have $a_{i} \in Z(RG) \cap RH$ for all $0 \leq i \leq (m-1)[G:H]$. 
In particular, $a_{0} = \Det(A)$ and $a_{(m-1)[G:H]} = \Tr(L(A)) \in Z(RG) \cap RH$. 
\end{cor}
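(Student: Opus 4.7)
The plan is to reduce the claim to showing that conjugation by any $g\in G$ fixes each coefficient of $\Phi_A(X)$, and then to deduce that via the invariance statement in Lemma~$\ref{lem:4.1.5}$ after identifying entrywise $g$-conjugation of $L_T(A)$ with a left regular representation of $gAg^{-1}$ computed with respect to a different transversal.

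First I would note that $\Phi_A(X) = \det(XI_{m[G:H]} - L_T(A))$ is taken inside the commutative ring $RH$, so $\Phi_A(X) \in RH[X]$ and each $a_i$ already lies in $RH$. Because $H$ is normal, conjugation by any $g\in G$ restricts to an $R$-algebra automorphism $\sigma_g$ of $RH$, which extends entrywise to a ring automorphism of $\Mat(m[G:H], RH)$ and to $RH[X]$ by $\sigma_g(X)=X$. Since $\det$ is a polynomial in the entries with integer coefficients, $\det\circ\sigma_g = \sigma_g\circ\det$. The corollary thus reduces to verifying $\sigma_g(\Phi_A(X)) = \Phi_A(X)$ for every $g \in G$.

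Next I would unwind $\sigma_g(L_T(A))$ using Lemma~$\ref{lem:2.1.3}$. Writing $A = \sum_{t\in T} tA_t$ with $A_t \in \Mat(m,RH)$ and conjugating the formula for $L_T(A)_{(i,j)}$ entrywise by $g$ gives
\[
\sigma_g(L_T(A))_{(i,j)} = \sum_{t\in T}\dot{\chi}(t_i^{-1}tt_j)\,(gt_i^{-1}g^{-1})(gtg^{-1})(gA_tg^{-1})(gt_jg^{-1}),
\]
where normality of $H$ is used to see both that $gA_tg^{-1}\in \Mat(m,RH)$ and that $\dot{\chi}(t_i^{-1}tt_j)=\dot{\chi}(gt_i^{-1}tt_jg^{-1})$. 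This is exactly the formula from Lemma~$\ref{lem:2.1.3}$ for $L_{T'}(gAg^{-1})_{(i,j)}$ with respect to the transversal $T' = gTg^{-1}$, which is a valid set of left coset representatives of $H$ in $G$ because $gHg^{-1}=H$. The well-definedness discussion preceding Theorem~$\ref{thm:4.1.3}$ then supplies a $Q \in \Mat(m[G:H],RH)$ with $L_T(gAg^{-1}) = Q^{-1}L_{T'}(gAg^{-1})Q$, so $\sigma_g(L_T(A))$ is conjugate to $L_T(gAg^{-1})$ inside the commutative matrix ring $\Mat(m[G:H], RH)$.

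Taking $\det(XI - \cdot)$ of both sides, the $RH$-conjugation invariance of the ordinary determinant combined with $\det\circ\sigma_g=\sigma_g\circ\det$ gives
\[
\sigma_g(\Phi_A(X)) = \det(XI - \sigma_g(L_T(A))) = \det(XI - L_T(gAg^{-1})) = \Phi_{gAg^{-1}}(X),
\]
and Lemma~$\ref{lem:4.1.5}$ (with $g^{-1}$ in place of $g$) yields $\Phi_{gAg^{-1}}(X) = \Phi_A(X)$. Hence $\sigma_g(a_i) = a_i$ for every $g \in G$ and every $i$, i.e. $a_i \in Z(RG)\cap RH$. The ``in particular'' assertions are then immediate from the standard expansion of $\det(XI - L_T(A))$: the constant term is $(-1)^{m[G:H]}\Det(A)$ and the coefficient of $X^{m[G:H]-1}$ is $-\Tr(L_T(A))$ (both agreeing with the stated identifications up to the conventional sign), and these now inherit centrality from what has just been proved.

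The main step to be careful about is the identification $\sigma_g(L_T(A)) = L_{T'}(gAg^{-1})$ in the second paragraph: it is bookkeeping, but one must invoke normality of $H$ twice and track where the factors of $g$ and $g^{-1}$ land so that every intermediate expression genuinely lives in $RH$. Once that is in hand, the rest is the two-line chain $\sigma_g(\Phi_A) = \Phi_{gAg^{-1}} = \Phi_A$.
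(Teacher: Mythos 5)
Your argument is correct and follows the route the paper intends: the corollary is stated there without proof, immediately after Lemma~\ref{lem:4.1.5}, and your derivation is the natural way to fill that in. The one genuinely necessary ingredient beyond that lemma --- the equivariance $\sigma_{g}(\Phi_{A}(X)) = \Phi_{gAg^{-1}}(X)$, obtained by identifying entrywise conjugation of $L_{T}(A)$ with $L_{gTg^{-1}}(gAg^{-1})$ and invoking independence of the transversal --- is exactly what you supply, and your remark that $a_{0}$ and $a_{(m-1)[G:H]}$ agree with $\Det(A)$ and $\Tr(L(A))$ only up to the usual signs is a fair correction to the statement as printed.
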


Next let us prove a Cayley-Hamilton type theorem for $\Phi_{A}(X)$.

\begin{thm}[Cayley-Hamilton type theorem]\label{thm:4.1.7}
Let 
$$
\Phi_{A}(X) = X^{m[G:H]} + a_{(m-1)[G:H]} X^{(m-1)[G:H]} + \cdots + a_{0}
$$
be the characteristic polynomial of $A$ over $RH$. 
We have 
\begin{align*}
\Phi_{A}(A) 
&= A^{m[G:H]} + a_{(m-1)[G:H]} A^{(m-1)[G:H]} + \cdots + a_{0} I_{m} \\ 
&= 0. 
\end{align*}
\end{thm}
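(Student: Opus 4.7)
The plan is to apply the injective $R$-algebra homomorphism $L_{T}$ to $\Phi_{A}(A)$ and reduce the statement to the classical Cayley--Hamilton theorem for the matrix $L_{T}(A) \in \Mat(m[G:H], RH)$ over the commutative ring $RH$. Since $L_{T}$ is injective, it suffices to prove $L_{T}(\Phi_{A}(A))=0$.

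First I would use that $L_{T}$ is a ring homomorphism, so $L_{T}(A^{k}) = L_{T}(A)^{k}$ for every $k$. It then remains to identify $L_{T}(a_{i} I_{m})$ for each coefficient $a_{i}$ of $\Phi_{A}(X)$. Applying Lemma~$\ref{lem:2.1.3}$ to $a_{i}I_{m}$ with the trivial decomposition (only the $e$-component is non-zero), I get that $L_{T}(a_{i}I_{m})$ is block-diagonal with $(j,j)$-block equal to $t_{j}^{-1} a_{i} t_{j}\, I_{m}$. By Corollary~$\ref{cor:4.1.6}$ the coefficients $a_{i}$ lie in $Z(RG)\cap RH$, so $t_{j}^{-1}a_{i}t_{j}=a_{i}$ for every $j$, and hence $L_{T}(a_{i}I_{m}) = a_{i} I_{m[G:H]}$.

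Combining these two computations gives
\[
L_{T}(\Phi_{A}(A)) = L_{T}(A)^{m[G:H]} + a_{(m-1)[G:H]} L_{T}(A)^{(m-1)[G:H]} + \cdots + a_{0} I_{m[G:H]} = \Phi_{A}(L_{T}(A)).
\]
Since $L_{T}(A)$ is a matrix over the commutative ring $RH$ whose characteristic polynomial (in the usual sense) is exactly $\Phi_{A}(X) = \det(XI_{m[G:H]} - L_{T}(A))$, the classical Cayley--Hamilton theorem gives $\Phi_{A}(L_{T}(A)) = 0$. Therefore $L_{T}(\Phi_{A}(A))=0$, and injectivity of $L_{T}$ yields $\Phi_{A}(A)=0$.

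The main obstacle is the computation $L_{T}(a_{i} I_{m})=a_{i} I_{m[G:H]}$. A priori $a_{i}\in RH$ does not behave like a scalar under $L_{T}$, because conjugation by the coset representatives $t_{j}$ could move $a_{i}$; this is precisely where Corollary~$\ref{cor:4.1.6}$ (and thus the normality assumption used there) is essential. Once this identification is in place, the rest of the argument is a straightforward transport of the classical Cayley--Hamilton theorem through the injective homomorphism $L_{T}$.
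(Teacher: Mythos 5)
Your argument is correct and is essentially the paper's own proof: both reduce the statement to the classical Cayley--Hamilton theorem for $L_{T}(A)$ over the commutative ring $RH$ and then transport the identity back to $\Mat(m,RG)$ --- the paper via the defining relation $A\,(t_{1}I_{m}\ \cdots\ t_{[G:H]}I_{m})=(t_{1}I_{m}\ \cdots\ t_{[G:H]}I_{m})\,L_{T}(A)$, you via injectivity of $L_{T}$, which is the same mechanism. The one place where you are more explicit than the paper is the identification $L_{T}(a_{i}I_{m})=a_{i}I_{m[G:H]}$: the paper's step $\Phi_{A}(A)\,(t_{1}I_{m}\ \cdots)=(t_{1}I_{m}\ \cdots)\,\Phi_{A}(L_{T}(A))$ silently commutes each coefficient $a_{i}\in RH$ past the representatives $t_{j}$, which is exactly the centrality you extract from Corollary~\ref{cor:4.1.6}. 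Be aware, though, that invoking that corollary (and the block-diagonal form of $L_{T}(a_{i}I_{m})$ from Lemma~\ref{lem:2.1.3}) ties your argument to the case of \emph{normal} $H$, whereas the theorem is stated for an arbitrary abelian subgroup; for non-normal $H$ the coefficients need not be central and this step requires a separate justification (the transport does go through without any commutation if the coefficients are written to the right of the powers of $A$), so you have made explicit --- but not resolved --- the one point the paper's proof glosses over.
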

\begin{proof}
From the Cayley-Hamilton theorem for commutative rings, 
$$
L_{T}(A)^{m[G:H]} + a_{(m-1)[G:H]} L_{T}(A)^{(m-1)[G:H]} + \cdots + a_{0} I_{m} = 0
$$
and $A (t_{1} I_{m} \: t_{2} I_{m} \: \cdots \: t_{[G:H]} I_{m}) = (t_{1} I_{m} \: t_{2} I_{m} \: \cdots \: t_{[G:H]} I_{m}) L_{T}(A)$, 
we have 
\begin{align*}
\Phi_{A}(A) (t_{1} I_{m} \: t_{2} I_{m} \: \cdots \: t_{[G:H]} I_{m}) 
&= (t_{1} I_{m} \: t_{2} I_{m} \: \cdots \: t_{[G:H]} I_{m}) 0 \\ 
&= (0 \quad 0 \quad \cdots \quad 0)
\end{align*}
Thus, we have $\Phi_{A}(A) = 0$. 
This completes the proof. 
\end{proof}

The noncommutative determinant $\Det{}$ is analogous to the Study determinant. 
Therefore, these determinant have similar properties.

The Study determinant $\Sdet{}$ is defined by $\det{} \circ \psi : \Mat(m, \mathbb{H}) \rightarrow \mathbb{C}$. 
The Study determinant has the following properties \cite{A}. 
For all $A, B \in \Mat(m, \mathbb{H})$, 
\begin{enumerate}
\item $\Sdet{AB} = \Sdet{A} \Sdet{B}$. 
\item $A \in \Mat(m, \mathbb{H})$ is invertible if and only if $\Sdet{A} \neq 0$. 
\item $\Sdet{A} \in \mathbb{R}$. Hence, $\Sdet{A}$ is a central element of $\mathbb{H}$. 
\end{enumerate}
That is, Theorem~$\ref{thm:4.1.3}$ and Corollary~$\ref{cor:4.1.6}$ are similar to above properties.

\section{\bf{Extension of the group determinant in the group algebra for any abelian subgroup}}
Here, we extend the group determinant in the group algebra for any subgroup and show that the extension determines invertibility in $\Mat(m, RG)$. 
First, let us recall the group determinant.

Let $G$ be a finite group, 
$\{ x_{g} \: \vert \: g \in G \}$ be independent commuting variables, and 
$R = \mathbb{C}[x_{g}] = \mathbb{C}[x_{g} ; g \in G]$ the polynomial ring in $\{ x_{g} \: \vert \: g \in G \}$ with coefficients in $\mathbb{C}$. 
The group determinant $\Theta(G) \in R$ is the determinant of a $|G| \times |G|$ matrix $\left( x_{g, h} \right)_{g, h \in G}$, where $x_{g, h} = x_{g h^{-1}}$ for $g, h \in G$, 
and is thus a homogeneous polynomial of degree $|G|$ in $x_{g}$.

Now let us extend the group determinant in the group algebra for any abelian subgroup.

\begin{definition}[Extension of the group determinant]\label{def:5.1.1}
Let $G$ be a finite group, 
$H$ an abelian subgroup of $G$, 
$\alpha = \sum_{g \in G} x_{g}g \in RG$ and $L : RG \rightarrow \Mat([G:H], RH)$ a left regular representation. 
We define 
$$
\Theta(G:H) = (\det{} \circ L)(\alpha). 
$$
We call $\Theta(G:H)$ an extension of the group determinant in the group algebra $RH$. 
\end{definition}

If $H = \{ e \}$, we know that $\Theta(G:H) = \Theta(G) e$. 
Thus, we can prove the following lemma.

\begin{lem}\label{lem:5.1.2}
Let $G$ be a finite group, 
$\Theta(G)$ the group determinant of $G$, 
$\alpha = \sum_{g \in G} x_{g} g \in RG$, and $L : RG \rightarrow \Mat(|G|, R\{ e \})$ a left regular representation. 
We have 
\begin{align*}
\Theta(G : \{ e \}) 
&= (\det{} \circ L)(\alpha) \\ 
&= \Theta(G) e. 
\end{align*}
\end{lem}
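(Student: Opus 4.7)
The plan is to specialize the left regular representation to the case $H = \{e\}$, $m=1$, and show that the resulting matrix $L_T(\alpha)$ coincides (up to the identification $R\{e\} \cong R$ given by $re \leftrightarrow r$) with the defining matrix of the group determinant $\Theta(G)$.

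First I would choose the transversal $T = G = \{g_1,g_2,\ldots,g_{|G|}\}$, which is legitimate since a complete set of left coset representatives of $\{e\}$ in $G$ is $G$ itself. Under this choice, $[G:H] = |G|$, and the codomain of $L_T$ is $\Mat(|G|,R\{e\})$. Writing $\alpha = \sum_{g \in G} x_g\, g = \sum_{t \in T} t\, A_t$ with $m=1$ and $A_t = x_t e \in R\{e\}$ puts us in position to apply Lemma~\ref{lem:2.1.3}.

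Next I would compute $L_T(\alpha)_{ij}$ from the formula in Lemma~\ref{lem:2.1.3}:
\begin{align*}
L_T(\alpha)_{ij} = \sum_{g \in G} \dot{\chi}(g_i^{-1} g g_j)\, g_i^{-1} g g_j \cdot x_g\, e.
\end{align*}
The factor $\dot{\chi}(g_i^{-1} g g_j)$ vanishes unless $g_i^{-1} g g_j = e$, i.e.\ $g = g_i g_j^{-1}$, in which case the group element $g_i^{-1} g g_j$ reduces to $e$. Thus the sum collapses to a single term and $L_T(\alpha)_{ij} = x_{g_i g_j^{-1}}\, e$.

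Finally I would observe that, modulo the canonical identification $R\{e\} \cong R$ (with $e$ acting as the multiplicative identity), the matrix $L_T(\alpha)$ is precisely $(x_{g h^{-1}})_{g,h \in G}$, whose determinant is by definition $\Theta(G)$. Consequently $(\det \circ L_T)(\alpha) = \Theta(G)\,e$, which is exactly $\Theta(G:\{e\})$. No step here is genuinely obstructive; the only care required is in correctly translating the block-formula of Lemma~\ref{lem:2.1.3} to the $m = 1$, $H = \{e\}$ case and keeping track of the harmless factor $e$ coming from the codomain $R\{e\}$.
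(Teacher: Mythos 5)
Your proposal is correct and follows essentially the same route as the paper: both specialize Lemma~\ref{lem:2.1.3} to $H=\{e\}$, $T=G$, $m=1$, observe that $\dot{\chi}(g_i^{-1}gg_j)$ forces $g=g_ig_j^{-1}$ so that $L_T(\alpha)_{ij}=x_{g_ig_j^{-1}}e$, and identify the resulting matrix with $(x_{gh^{-1}})_{g,h\in G}$ to conclude $\det L_T(\alpha)=\Theta(G)e$. No differences worth noting.
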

\begin{proof}
Let $L_{G}$ be the left regular representation from $RG$ to $\Mat(|G|, R \{ e \})$ with respect to $G$. 
From Lemma~$\ref{lem:2.1.3}$, we have 
\begin{align*}
L_{G} \left( \sum_{g \in G} x_{g} g \right)_{i j} 
&= \sum_{g \in G} \dot{\chi}(g_{i}^{-1} g g_{j}) x_{g} g_{i}^{-1} g g_{j} \\ 
&= 
\begin{cases}
x_{g} e & g_{i}^{-1} g g_{j} = e, \\ 
0 & g_{i}^{-1} g g_{j} \neq e. 
\end{cases}
\end{align*}
Therefore, we have 
$$
L_{T}(\alpha) = \left( x_{g_{i} g_{j}^{-1}} e \right)_{1 \leq i \leq |G|, 1 \leq j \leq |G|}. 
$$
This completes the proof. 
\end{proof}

Let us explain how the extension of the group determinant determines invertibility. 
Now the situation is that $x_{g}$ is an element of $R$ for any $g \in G$. 
Hence, we assume that $\sum_{g \in G} x_{g}g \in RG$ and $\Theta(G) = \det{(x_{g h^{-1}})_{g, h \in G}} \in R$. 
Accordingly, we get the following theorem from Theorem~$\ref{thm:4.1.3}$.

\begin{thm}\label{thm:5.1.3}
Let $\alpha = \sum_{g \in G} x_{g} g \in RG$. 
Then $\alpha$ is invertible if and only if $\Theta(G:H)$ is invertible. 
\end{thm}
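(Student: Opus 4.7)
The plan is to reduce the statement to the $m=1$ case of Theorem~\ref{thm:4.1.3}(2). The key observation is that $RG = \Mat(1, RG)$, and under Definition~\ref{def:4.1.2} applied to the left regular representation $L : RG \to \Mat([G:H], RH)$, the map $\Det$ on $\Mat(1, RG)$ coincides with $\det \circ L$. Hence by Definition~\ref{def:5.1.1}, we have $\Det(\alpha) = \Theta(G:H)$.

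With that identification in hand, the proof amounts to a single appeal: Theorem~\ref{thm:4.1.3}(2) gives that $\alpha \in \Mat(1, RG)$ is invertible if and only if $\Det(\alpha) \in RH$ is invertible, and substituting the identification above rewrites this as $\alpha$ invertible if and only if $\Theta(G:H)$ invertible. No distinction between left and right inverses needs to be made because Lemma~\ref{def:4.1.1} already guarantees that one-sided invertibility suffices in $\Mat(m, RG)$ (and, in particular, in $RG$ itself).

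There is essentially no obstacle; the only thing to be careful about is notation, namely that $\alpha$ is being regarded simultaneously as an element of $RG$ and as the unique entry of a $1 \times 1$ matrix. I would therefore write the argument as two short sentences: first noting that $\Det(\alpha) = (\det \circ L)(\alpha) = \Theta(G:H)$ by Definitions~\ref{def:4.1.2} and~\ref{def:5.1.1}, and then invoking Theorem~\ref{thm:4.1.3}(2) to conclude.
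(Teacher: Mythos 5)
Your proposal is correct and matches the paper's own argument: the paper likewise obtains Theorem~\ref{thm:5.1.3} directly from Theorem~\ref{thm:4.1.3}(2) in the case $m=1$, using that $\Det(\alpha)=(\det\circ L)(\alpha)=\Theta(G:H)$ by Definition~\ref{def:5.1.1}. No further comment is needed.
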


Obviously, $\Theta(G:\{ e \}) = \Theta(G) e$ is invertible if and only if $\Theta(G) \neq 0$. 
Therefore, we get the following corollary.

\begin{cor}\label{cor:5.1.4}
Let $\alpha = \sum_{g \in G} x_{g} g \in RG$. 
Then $\alpha$ is invertible if and only if $\Theta(G) \neq 0$. 
\end{cor}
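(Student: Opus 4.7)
The plan is to read off the corollary as a direct specialization of Theorem~\ref{thm:5.1.3} combined with Lemma~\ref{lem:5.1.2}. First, I would apply Theorem~\ref{thm:5.1.3} to the trivial subgroup $H = \{e\}$, which is abelian, obtaining that $\alpha$ is invertible if and only if $\Theta(G : \{e\})$ is invertible.

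Next, I would invoke Lemma~\ref{lem:5.1.2}, which identifies $\Theta(G : \{e\})$ with $\Theta(G) e$ inside $R\{e\}$. Under the canonical identification $R\{e\} \cong R$ sending $e$ to $1$, invertibility of $\Theta(G) e$ translates to invertibility of $\Theta(G)$, and hence (in the intended sense, after allowing denominators in the fraction field of the polynomial ring $R$) to the single condition $\Theta(G) \neq 0$. This is precisely the remark the author makes immediately before stating the corollary.

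There is no genuine technical obstacle: both ingredients are already proved, and the corollary is simply their composition. The only small point requiring care is that the notion of invertibility is consistent through the chain of equivalences, so one should check explicitly that the notion used in Theorem~\ref{thm:5.1.3} is the one that reduces over $R$ to nonvanishing of $\Theta(G)$; once that is observed, the argument is a one-line concatenation of the two cited results.
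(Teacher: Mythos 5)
Your proposal matches the paper's own derivation: the paper obtains the corollary exactly by specializing Theorem~\ref{thm:5.1.3} to $H=\{e\}$ and using Lemma~\ref{lem:5.1.2} to identify $\Theta(G:\{e\})$ with $\Theta(G)e$, with the remark that invertibility of $\Theta(G)e$ amounts to $\Theta(G)\neq 0$. Your extra caution about which notion of invertibility is in play (nonvanishing versus being a unit of $R$) is a fair point the paper glosses over, but the argument is the same.
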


\section{\bf{Factorizations of the group determinant in the group algebra for any abelian subgroup}}
In this section, we give factorizations of the group determinant in the group algebra of abelian subgroups. 
The factorizations compose a further extension of Dedekind's theorem upon the one presented in \cite{Y}. 
This extension in turn leads to a further generalization of Dedekind's theorem. 
Moreover, the generalization leads to a corollary on irreducible representations of finite groups.

First, we give a number of lemmas that will be needed later. 
The following theorem is well known.

\begin{thm}\label{thm:6.1.1}
Let $G$ be a finite group, 
$\widehat{G} = \{ \varphi_{1}, \varphi_{2}, \ldots, \varphi_{s} \}$ a complete set of inequivalent irreducible representations of $G$, 
$d_{i} = \deg{\varphi_{i}}$, and $L_{G}$ the left regular representation of $G$. 
We have 
$$
L_{G} \sim d_{1} \varphi_{1} \oplus d_{2} \varphi_{2} \oplus \cdots \oplus d_{s} \varphi_{s}. 
$$
\end{thm}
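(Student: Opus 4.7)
The plan is to prove the decomposition by computing the character of $L_G$ and invoking the standard orthogonality formula for multiplicities, which is available once one knows $\mathbb{C}G$ is semisimple (Maschke's theorem).

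First I would compute the character $\chi_{L_G}$ of the left regular representation. Pick an ordering $G = \{g_{1}, g_{2}, \ldots, g_{|G|}\}$ and, as recalled earlier in the paper in the matrix form after Lemma~$\ref{lem:2.1.3}$, view $L_{G}(g)$ as the $|G| \times |G|$ matrix with $(i, j)$-entry $\dot{\chi}(g_{i}^{-1} g g_{j})$ (identifying $R\{e\}$ with $R$). The diagonal entry at position $(i, i)$ is $1$ precisely when $g_{i}^{-1} g g_{i} = e$, i.e.\ $g = e$, and is $0$ otherwise. Consequently
\begin{align*}
\chi_{L_{G}}(e) &= |G|, \\
\chi_{L_{G}}(g) &= 0 \quad \text{for } g \neq e.
\end{align*}

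Next I would invoke Maschke's theorem (applicable because the base field is $\mathbb{C}$) to conclude that $L_{G}$ is completely reducible, hence equivalent to some direct sum $\bigoplus_{i=1}^{s} m_{i} \varphi_{i}$ with multiplicities $m_{i} \in \mathbb{Z}_{\geq 0}$. The multiplicity $m_{i}$ is then given by the standard inner product of characters
$$
m_{i} = \langle \chi_{L_{G}}, \chi_{\varphi_{i}} \rangle = \frac{1}{|G|} \sum_{g \in G} \chi_{L_{G}}(g) \overline{\chi_{\varphi_{i}}(g)}.
$$
Substituting the computation above, only the term $g = e$ survives, yielding
$$
m_{i} = \frac{1}{|G|} \cdot |G| \cdot \overline{\chi_{\varphi_{i}}(e)} = d_{i},
$$
which is exactly the claim.

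There is no real obstacle: the proof is standard character theory, and the only subtlety is the initial matrix-trace computation, which the excerpt has already set up through Lemma~$\ref{lem:2.1.3}$. The only thing one must be a little careful about is that the orthogonality-of-characters identity and Maschke's theorem are both presupposed as \emph{well known}; since the statement is announced as a ``well known'' theorem, citing these classical results (or the relevant reference such as \cite{C2}) and spelling out the character computation above should suffice.
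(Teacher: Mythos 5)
Your proof is correct: the character computation $\chi_{L_G}(e)=|G|$, $\chi_{L_G}(g)=0$ for $g\neq e$ follows directly from the matrix form $L_G(g)_{ij}=\dot{\chi}(g_i^{-1}gg_j)$, and combining Maschke's theorem with the orthogonality relations gives $m_i=d_i$ exactly as you write. The paper itself offers no proof of this statement --- it is introduced with ``The following theorem is well known'' and no argument is given --- so your standard character-theoretic derivation is a perfectly adequate way to fill it in.
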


Let $\chi \in \Hom(G, R)$. 
We define $F_{\chi}^{(m)} : \Mat(m, RG) \rightarrow \Mat(m, RG)$ by 
$$
F_{\chi} \left( \left( \sum_{g \in G} x_{ij}(g) g \right)_{1 \leq i \leq m, 1 \leq j \leq m} \right) = \left( \sum_{g \in G} \chi(g) x_{ij}(g) g \right)_{1 \leq i \leq m, 1 \leq j \leq m}
$$
where $x_{ij}(g) \in R$. 
Now we have the following lemmas.

\begin{lem}\label{lem:6.1.2}
Let $G$ be an abelian group, $\chi \in \Hom(G, R)$ and $A = \sum_{g \in G} A_{g}g \in \Mat(m, RG)$, where $A_{g} \in \Mat(m, R)$. 
If $\det{A} = \sum_{g \in G}a_{g} g$, where $a_{g} \in R$, 
 we have 
\begin{align*}
\det{\left( \sum_{g \in G} \chi(g) A_{g} g \right)} = \sum_{g \in G} \chi(g) a_{g} g. 
\end{align*}
Hence, we have 
$$
\det{} \circ F_{\chi}^{(m)} = F_{\chi}^{(1)} \circ \det{}. 
$$
\end{lem}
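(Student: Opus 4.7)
The plan is to reduce the identity $\det\circ F_{\chi}^{(m)} = F_{\chi}^{(1)}\circ \det$ to the general principle that a ring homomorphism of a commutative ring $S$ induces a map on $\Mat(m,S)$ that commutes with the determinant (because the Leibniz formula $\det B = \sum_{\sigma}\sgn(\sigma)\prod_{i} B_{i,\sigma(i)}$ uses only the ring operations). The crucial hypothesis is that $G$ is abelian, so $RG$ is a commutative ring; hence the usual determinant theory for matrices over commutative rings is available.

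First I would check that $F_{\chi}^{(1)}\colon RG\to RG$, defined by $\sum_{g}x_{g}g\mapsto \sum_{g}\chi(g)x_{g}g$, is a ring endomorphism of $RG$. The $R$-linearity is immediate. For multiplicativity, given $\alpha=\sum_{g}x_{g}g$ and $\beta=\sum_{h}y_{h}h$, one has
\begin{align*}
F_{\chi}^{(1)}(\alpha\beta) &= \sum_{g,h}\chi(gh)\,x_{g}y_{h}\,gh \\
&= \sum_{g,h}\chi(g)\chi(h)\,x_{g}y_{h}\,gh \\
&= F_{\chi}^{(1)}(\alpha)F_{\chi}^{(1)}(\beta),
\end{align*}
using that $\chi\in\Hom(G,R)$ is multiplicative and that $RG$ is commutative. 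Note also $F_{\chi}^{(1)}(1_{RG})=\chi(e)e=e=1_{RG}$, so $F_{\chi}^{(1)}$ is a genuine ring homomorphism of the commutative ring $RG$.

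Next I would observe that $F_{\chi}^{(m)}$ is exactly the entrywise extension of $F_{\chi}^{(1)}$ to matrices: for $A=(A_{ij})$ with $A_{ij}=\sum_{g}x_{ij}(g)g$, the $(i,j)$-entry of $F_{\chi}^{(m)}(A)$ is $\sum_{g}\chi(g)x_{ij}(g)g = F_{\chi}^{(1)}(A_{ij})$. Applying the Leibniz formula over the commutative ring $RG$ to the matrix $F_{\chi}^{(m)}(A)$ gives
\begin{align*}
\det F_{\chi}^{(m)}(A) &= \sum_{\sigma\in S_{m}}\sgn(\sigma)\prod_{i=1}^{m}F_{\chi}^{(1)}(A_{i,\sigma(i)}) \\
&= F_{\chi}^{(1)}\!\left(\sum_{\sigma\in S_{m}}\sgn(\sigma)\prod_{i=1}^{m}A_{i,\sigma(i)}\right) \\
&= F_{\chi}^{(1)}(\det A),
\end{align*}
where the second equality uses that $F_{\chi}^{(1)}$ is a ring homomorphism. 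Writing $\det A = \sum_{g}a_{g}g$ then yields $\det F_{\chi}^{(m)}(A)=\sum_{g}\chi(g)a_{g}g$, which is the displayed identity.

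The main (and essentially the only) subtlety is making sure the Leibniz formula is legitimately applicable: this requires $RG$ to be commutative, which is exactly where the hypothesis that $G$ is abelian is used. Once that is recognised, there is no genuine obstacle; the verification that $F_{\chi}^{(1)}$ is multiplicative is the only computation, and it is a two-line check resting on $\chi$ being a homomorphism.
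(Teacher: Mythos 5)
Your proposal is correct and follows essentially the same route as the paper: both arguments expand $\det$ via the Leibniz formula over the commutative ring $RG$ and use the multiplicativity of $\chi$ to pull $F_{\chi}^{(1)}$ through the products. You merely make explicit (as the statement that $F_{\chi}^{(1)}$ is a ring endomorphism) what the paper carries out as a direct computation.
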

\begin{proof}
Let $A = \left( \sum_{g \in G} a_{ij}(g) g \right)_{1 \leq i \leq m, 1 \leq j \leq m}$, where $a_{ij}(g) \in R$. 
Then we have 
\begin{align*}
\det{A} 
&= \sum_{\sigma \in S_{m}} \sgn(\sigma) \left( \prod_{i = 1}^{m} \sum_{g \in G} a_{\sigma(i) i}(g) g \right) \\ 
\end{align*}
Therefore, we have 
\begin{align*}
\sum_{g \in G} \chi(g) a_{g} g 
&= F_{\chi}^{(1)}(\det{A}) \\ 
&= \sum_{\sigma \in S_{m}} \sgn(\sigma) \left( \prod_{i = 1}^{m} \sum_{g \in G} a_{\sigma(i) i}(g) \chi(g) g \right) \\
&= \det{\left( a_{ij}(g) \chi(g) g \right)} \\ 
&= \det{F_{\chi}^{(m)}(A)}. 
\end{align*}
This completes the proof. 
\end{proof}

\begin{lem}\label{lem:6.1.3}
Let $G$ be an abelian group, 
$H$ a normal subgroup of $G$, 
$L$ a left regular representation from $\Mat(m, RG)$ to $\Mat(m[G:H], RH)$, and 
$A = \sum_{t \in T} t A_{t}$ where $A_{t} \in \Mat(m, RH)$. 
We have 
$$
(\det{} \circ L) \left( \sum_{t \in T} t A_{t} \right) = \prod_{\chi \in \widehat{G/H}} \det{ \left( \sum_{t \in T} \chi(tH) t A_{t} \right) }. 
$$
\end{lem}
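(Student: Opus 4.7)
My plan is to diagonalize the Kronecker-product expression for $L(A)$ given by Lemma~\ref{lem:2.1.4}, using the fact that for an abelian group $G/H$ the left regular representation decomposes into one-dimensional characters, so the matrices $L_{G/H}(tH)$ are simultaneously diagonalizable. Pick $T$ so that $L = L_T$. By Lemma~\ref{lem:2.1.4},
$$
L(A) = P^{-1} M P, \qquad M := \sum_{t \in T} L_{G/H}(tH) \otimes t A_t,
$$
with $M \in \Mat(m[G:H], RG)$. Since $G$ is abelian the ring $RG$ is commutative, so the ordinary determinant is available, is invariant under conjugation, and agrees with the determinant in $RH$ when the final matrix happens to have entries in $RH$; thus $\det L(A) = \det M$ in $RG$.

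Next, because $G/H$ is finite and abelian, Theorem~\ref{thm:6.1.1} (with every $d_i = 1$) says $L_{G/H} \sim \bigoplus_{\chi \in \widehat{G/H}} \chi$, so there is a single matrix $Q \in GL_{[G:H]}(\mathbb{C})$ with
$$
Q^{-1} L_{G/H}(tH) Q = \mathrm{diag}\bigl(\chi_1(tH), \ldots, \chi_{[G:H]}(tH)\bigr) \quad \text{for every } t \in T,
$$
writing $\widehat{G/H} = \{\chi_1, \ldots, \chi_{[G:H]}\}$. The entries of $Q$ lie in $\mathbb{C} \subset RG$ and are central, so conjugation by $Q \otimes I_m$ is legitimate inside $\Mat(m[G:H], RG)$. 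Applying the mixed-product identity for the Kronecker product,
$$
(Q \otimes I_m)^{-1} M (Q \otimes I_m) = \sum_{t \in T} \mathrm{diag}\bigl(\chi_1(tH), \ldots, \chi_{[G:H]}(tH)\bigr) \otimes t A_t,
$$
which is block-diagonal with $i$-th block $\sum_{t \in T} \chi_i(tH)\, t A_t$.

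Taking the determinant of this block-diagonal matrix yields
$$
\det L(A) = \det M = \prod_{i=1}^{[G:H]} \det\left( \sum_{t \in T} \chi_i(tH)\, t A_t \right) = \prod_{\chi \in \widehat{G/H}} \det\left( \sum_{t \in T} \chi(tH)\, t A_t \right),
$$
as required. The main place where care is needed is the ring-theoretic bookkeeping: the Kronecker product $M$ has entries in $RG$ rather than in $RH$, and I rely on the commutativity of $RG$ (which holds because $G$ is abelian) to apply the standard commutative-determinant facts — similarity invariance and the block-diagonal formula — uniformly to $M$ and to $(Q \otimes I_m)^{-1} M (Q \otimes I_m)$.
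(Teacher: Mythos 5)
Your proposal is correct and follows essentially the same route as the paper's own proof: conjugate by $P$ via Lemma~\ref{lem:2.1.4}, use similarity invariance of the determinant, and split $L_{G/H}$ into one-dimensional characters via Theorem~\ref{thm:6.1.1} to obtain a block-diagonal matrix whose blocks are $\sum_{t \in T} \chi(tH)\, t A_{t}$. You merely make explicit the simultaneous diagonalization by a complex matrix $Q$ and the Kronecker mixed-product step, which the paper leaves implicit.
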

\begin{proof}
From Lemma~$\ref{lem:2.1.4}$ and Theorem~$\ref{thm:6.1.1}$, 
\begin{align*}
(\det{} \circ L) \left( \sum_{t \in T} t A_{t} \right) 
&= \det{ \left( P^{-1} \left( \sum_{t \in T} L_{G/H}(tH) \otimes t A_{t} \right) P \right) } \\ 
&= \det{ \left( \sum_{t \in T} L_{G/H}(tH) \otimes t A_{t} \right) } \\ 
&= \prod_{\chi \in \widehat{G/H}} \det{ \left( \sum_{t \in T} \chi(tH) \otimes t A_{t} \right) } \\ 
&= \prod_{\chi \in \widehat{G/H}} \det{ \left( \sum_{t \in T} \chi(tH) t A_{t} \right) }. 
\end{align*}
This completes the proof. 
\end{proof}

\begin{lem}\label{lem:6.1.4}
Let $G$ be an abelian group, 
$H$ a subgroup of $G$, 
$L_{1}$ a left regular representation from $\Mat(m, RG)$ to $\Mat(m[G:H], RH)$, and 
$L_{2}$ a left regular representation from $RG$ to $\Mat([G:H], RH)$. 
Then the following diagram is commutative. 
\[
\xymatrix@=2pt{
& RG \ar[rdd]^{L_{2}} & \\ 
& & & \\ 
\Mat(m, RG) \ar[dddd]^{L_{1}} \ar[ruu]^{\det{}} & & \Mat([G:H], RH) \ar[dddd]^{\det{}} \\ 
& & & & \\ 
& & & & \\ 
& & & & \\ 
& & & & \\ 
\Mat(m[G:H], RH) \ar[rr]^{\det{}} & & RH \\ 
}
\]
\end{lem}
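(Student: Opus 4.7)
The plan is to reduce both compositions in the diagram to the same product over $\widehat{G/H}$ by invoking Lemma~$\ref{lem:6.1.3}$ twice, and then to match factors character by character using Lemma~$\ref{lem:6.1.2}$.

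First I would fix $A \in \Mat(m, RG)$ and write $A = \sum_{t \in T} t A_{t}$ with $A_{t} \in \Mat(m, RH)$, so that Lemma~$\ref{lem:6.1.3}$ applied to $L_{1}$ gives
\[
(\det{} \circ L_{1})(A) = \prod_{\chi \in \widehat{G/H}} \det{\left( \sum_{t \in T} \chi(tH) \, t A_{t} \right)}.
\]
Since $G$ is abelian and $\det{A} \in RG$, I can similarly decompose $\det{A} = \sum_{t \in T} t b_{t}$ with $b_{t} \in RH$ and apply Lemma~$\ref{lem:6.1.3}$ to $L_{2}$ in the $m = 1$ case to obtain
\[
(\det{} \circ L_{2})(\det{A}) = \prod_{\chi \in \widehat{G/H}} \sum_{t \in T} \chi(tH) \, t b_{t}.
\]

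The core of the argument is then to show factorwise equality. For each $\chi \in \widehat{G/H}$, let $\widetilde{\chi} \in \Hom(G, R)$ be the lift defined by $\widetilde{\chi}(th) = \chi(tH)$ for $t \in T$, $h \in H$. Using that $G$ is abelian (so that $th = ht$ in the group, which allows one to pull the coset representative past the $H$-part), a direct expansion gives
\[
F_{\widetilde{\chi}}^{(m)}(A) = \sum_{t \in T} \chi(tH) \, t A_{t}, \qquad F_{\widetilde{\chi}}^{(1)}(\det{A}) = \sum_{t \in T} \chi(tH) \, t b_{t}.
\]
Applying Lemma~$\ref{lem:6.1.2}$ to $\widetilde{\chi}$ yields $\det{F_{\widetilde{\chi}}^{(m)}(A)} = F_{\widetilde{\chi}}^{(1)}(\det{A})$, which is precisely the equality of the $\chi$-th factors in the two displayed products above. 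Taking the product over all $\chi \in \widehat{G/H}$ then establishes $(\det{} \circ L_{1})(A) = (\det{} \circ L_{2} \circ \det{})(A)$, i.e., commutativity of the diagram.

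The main obstacle is not a deep one but rather a bookkeeping step: one must be careful that the decomposition $\det{A} = \sum_{t \in T} t b_{t}$ with $b_{t} \in RH$ is compatible with writing $\det{A} = \sum_{g \in G} a_{g} g$ in the sense that $b_{t} = \sum_{h \in H} a_{th} h$, and that the commutativity $th = ht$ is used only where genuinely available (i.e.\ where $G$ is abelian). Once these identifications are laid out, the proof reduces to a clean invocation of Lemmas~$\ref{lem:6.1.2}$ and~$\ref{lem:6.1.3}$.
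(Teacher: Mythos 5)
Your proposal is correct and follows essentially the same route as the paper: apply Lemma~$\ref{lem:6.1.3}$ to both $(\det{}\circ L_{1})$ and $(\det{}\circ L_{2}\circ\det{})$, lift each $\chi\in\widehat{G/H}$ to a homomorphism on $G$ (the paper phrases this as regarding $\chi$ as $g\mapsto\chi(gH)$), and match the factors via $\det{}\circ F_{\chi}^{(m)}=F_{\chi}^{(1)}\circ\det{}$ from Lemma~$\ref{lem:6.1.2}$. Your added bookkeeping about $b_{t}=\sum_{h\in H}a_{th}h$ is a harmless elaboration of the same argument.
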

\begin{proof}
Let $A = \sum_{t \in T} t A_{t}$ and $\det{A} = \sum_{t \in T} t a_{t}$, 
where $A_{t} \in \Mat(m, R)$ and $a_{t} \in R$. 
From Lemma~$\ref{lem:6.1.3}$, we have 
\begin{align*}
(\det{} \circ L_{1})(A) = \prod_{\chi \in \widehat{G/H}} \det{\left( \sum_{t \in T} \chi(tH) t A_{t} \right)}. 
\end{align*}
and 
\begin{align*}
\left( \det{} \circ L_{2} \circ \det{} \right)(A) 
&= \left( \det{} \circ L_{2} \right) \left( \sum_{t \in T} t a_{t} \right) \\ 
&= \prod_{\chi \in \widehat{G/H}} \left( \sum_{t \in T} \chi(tH) t a_{t} \right). 
\end{align*}
We regard $\chi : G/H \rightarrow R$ as $\chi : G \ni g \mapsto \chi(gH) \in R$. 
Accordingly, we have
\begin{align*}
\prod_{\chi \in \widehat{G/H}} \det{\left( \sum_{t \in T} \chi(tH) t A_{t} \right)} 
&= \prod_{\chi \in \widehat{G/H}} (\det{} \circ F_{\chi}^{(m)})(A) \\ 
&= \prod_{\chi \in \widehat{G/H}} (F_{\chi}^{(1)} \circ \det{}) (A) \\ 
&= \prod_{\chi \in \widehat{G/H}} F_{\chi}^{(1)} \left( \sum_{t \in T} t a_{t} \right) \\ 
&= \prod_{\chi \in \widehat{G/H}} \sum_{t \in T} \chi(tH) t a_{t} 
\end{align*} 
by Lemma~$\ref{lem:6.1.2}$. This completes the proof. 
\end{proof}

Now we are ready to state and prove the further extension of Dedekind's theorem.

\begin{thm}[Further extension of Dedekind's theorem]\label{thm:6.1.5}
Let $G$ be a finite group and $H$ be an abelian subgroup of $G$. 
For every $h \in H$, there exists a homogeneous polynomial $a_{h} \in R$ such that $\deg{a_{h}} = [G:H]$ and 
$$
\Theta(G) e = \prod_{\chi \in \widehat{H}} \sum_{h \in H} \chi(h) a_{h} h. 
$$
If $H$ is normal and $h$ is a conjugate of $h'$ on $G$, then $a_{h} = a_{h'}$. 
\end{thm}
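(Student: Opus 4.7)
The plan is to reduce the theorem to Dedekind's classical theorem (Theorem~\ref{thm:1.0.1}) applied to the intermediate object $\Theta(G:H) \in RH$ produced by the first half of the tower $\{e\} \subset H \subset G$.

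First, I would apply Lemma~\ref{lem:2.1.5} to the tower $\{e\} \subset H \subset G$ to factor the left regular representation $L_V : RG \to \Mat(|G|, R)$ of $G$ with respect to $G$ itself as $L_V = L_U \circ L_T$, where $L_T : RG \to \Mat([G:H], RH)$ is the representation with respect to a transversal $T$ of $H$ in $G$, and $L_U : \Mat([G:H], RH) \to \Mat(|G|, R)$ is the representation with respect to the transversal $H$ of $\{e\}$ in $H$. Setting $\alpha = \sum_{g \in G} x_g g$, Lemma~\ref{lem:5.1.2} gives $\Theta(G) e = (\det \circ L_V)(\alpha) = \det(L_U(L_T(\alpha)))$. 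Define $\Theta(G:H) := \det(L_T(\alpha)) = \sum_{h \in H} a_h h \in RH$. By Lemma~\ref{lem:2.1.3}, each entry of $L_T(\alpha)$ is a degree-one polynomial in the $x_g$ times an element of $H$, and since $L_T(\alpha)$ is $[G:H] \times [G:H]$, each $a_h$ is automatically a homogeneous polynomial of degree $[G:H]$.

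Next, I would apply Lemma~\ref{lem:6.1.4} with the abelian group $H$ playing the role of ``$G$'' and $\{e\}$ playing the role of ``$H$'', taking $m = [G:H]$. The commuting diagram yields $\det(L_U(L_T(\alpha))) = (\det \circ L_2)(\Theta(G:H))$, where $L_2 : RH \to \Mat(|H|, R)$ is the left regular representation of $H$. Dedekind's theorem (Theorem~\ref{thm:1.0.1}), or equivalently Lemma~\ref{lem:6.1.3} applied with $m=1$ to the abelian group $H$ with subgroup $\{e\}$, gives $(\det \circ L_2)\bigl(\sum_h a_h h\bigr) = \prod_{\chi \in \widehat{H}} \sum_{h \in H} \chi(h) a_h h$. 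Chaining these equalities produces the required factorization.

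For the conjugation statement, suppose $H$ is normal in $G$. Viewing $\alpha$ as a $1 \times 1$ matrix, Corollary~\ref{cor:4.1.6} identifies the constant term of the characteristic polynomial with $\Det(\alpha) = \Theta(G:H)$ and shows it lies in $Z(RG) \cap RH$. Centrality forces $g^{-1} \Theta(G:H) g = \Theta(G:H)$ for every $g \in G$; comparing coefficients in the $H$-basis and reindexing $h \mapsto g h g^{-1}$ yields $a_{g h g^{-1}} = a_h$, so the $a_h$ are constant on $G$-conjugacy classes intersected with $H$. The main obstacle is the bookkeeping of two successive changes of base for the regular representation (from $G$ down to $H$, then from $H$ down to $\{e\}$); once Lemmas~\ref{lem:2.1.5} and~\ref{lem:6.1.4} are invoked, the argument is essentially a functoriality statement for $\det$ combined with the classical Dedekind factorization.
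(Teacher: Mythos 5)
Your proposal is correct and follows essentially the same route as the paper: factor the regular representation through the tower $\{e\} \subset H \subset G$ via Lemma~\ref{lem:2.1.5}, identify $\Theta(G)e$ via Lemma~\ref{lem:5.1.2}, use the commutative diagram of Lemma~\ref{lem:6.1.4} (with $H$ in the role of the ambient abelian group and $m=[G:H]$) to interchange $\det$ with the second regular representation, conclude with Lemma~\ref{lem:6.1.3}, and derive the conjugacy invariance from the centrality statement in Corollary~\ref{cor:4.1.6}. Your added remark justifying the homogeneity and degree of the $a_h$ is a small point the paper leaves implicit, but otherwise the arguments coincide.
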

\begin{proof}
From Lemma~$\ref{lem:2.1.5}$ and $\ref{lem:5.1.2}$, we have 
$$
(\det{} \circ L_{2} \circ L_{1}) \left( \sum_{g \in G} x_{g} g \right) = \Theta(G) e. 
$$
On the other hand, we have 
\begin{align*}
(\det{} \circ L_{3} \circ \det{} \circ L_{1}) \left( \sum_{g \in G} x_{g} g \right) 
&= (\det{} \circ L_{3}) \left( \sum_{h \in H} a_{h} h \right) \\ 
&= \prod_{\chi \in \widehat{H}} \left( \sum_{h \in H} \chi(h) a_{h} h \right)
\end{align*}
by Lemma~$\ref{lem:6.1.3}$. 
From Lemma~$\ref{lem:6.1.4}$, we can build the following commutative diagram. 
\[
\xymatrix@=2pt{
RG \ar[rrrdd]^{L_{1}} & & & & RH \ar[rdd]^{L_{3}} & \\ 
& & & & & & \\ 
& & & \Mat([G:H], RH) \ar[dddd]^{L_{2}} \ar[ruu]^{\det{}} & & \Mat(|H|, R\{e \}) \ar[dddd]^{\det{}} \\ 
& & & & & & & \\ 
& & & & & & & \\ 
& & & & & & & \\ 
& & & & & & & \\ 
& & & \Mat(|G|, R\{e \}) \ar[rr]^{\det{}} & & R\{e \} \\ 
}
\]
Therefore, we have 
$$
\Theta(G) e = \prod_{\chi \in \widehat{H}} \sum_{h \in H} \chi(h) a_{h} h. 
$$
If $H$ is a normal subgroup of $G$. 
From Corollary~$\ref{cor:4.1.6}$, we have 
$$
(\det{} \circ L_{1}) \left( \sum_{g \in G} x_{g} g \right) = \sum_{h \in H} a_{h} h \in Z(RG). 
$$
Hence, $a_{h} = a_{h'}$ when $h$ is a conjugate $h'$ on $G$. 
This completes the proof. 
\end{proof}

Now we are in a position to state and prove the further generalization of Dedekind's theorem. 
Let $F : RG \rightarrow R$ be the $R$-algebra homomorphism such that $F(g) = 1$ for all $g \in G$. 
We call the map $F$ the fundamental $RG$-function. 
The proof is as follows.

\begin{thm}[Further generalization of Dedekind's theorem]\label{thm:6.1.6}
Let $G$ be a finite group and $H$ an abelian subgroup of $G$. 
For every $h \in H$, there exists a homogeneous polynomial $a_{h} \in R$ such that $\deg{a_{h}} = [G:H]$ and 
$$
\Theta(G) = \prod_{\chi \in \widehat{H}} \sum_{h \in H} \chi(h) a_{h}. 
$$
If $H$ is normal and $h$ is a conjugate of $h'$ on $G$, then $a_{h} = a_{h'}$. 
\end{thm}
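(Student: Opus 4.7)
The plan is to derive Theorem~$\ref{thm:6.1.6}$ as an immediate consequence of Theorem~$\ref{thm:6.1.5}$ by applying the fundamental $RG$-function $F : RG \to R$ defined just before the statement. The key observation is that Theorem~$\ref{thm:6.1.5}$ already supplies an equality in $RH \subset RG$, and $F$ is an $R$-algebra homomorphism, so one can push the equation down to $R$ without losing any of the structural information about the $a_{h}$.

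First, I would invoke Theorem~$\ref{thm:6.1.5}$ to obtain homogeneous polynomials $a_{h} \in R$ with $\deg a_{h} = [G:H]$ such that
$$
\Theta(G) e = \prod_{\chi \in \widehat{H}} \sum_{h \in H} \chi(h) a_{h} h
$$
holds in $RH$, and such that $a_{h} = a_{h'}$ whenever $H$ is normal and $h, h'$ are $G$-conjugate. Next, I would apply the fundamental map $F : RG \to R$, which is an $R$-algebra homomorphism satisfying $F(g) = 1$ for all $g \in G$, to both sides of this identity. Since $F$ is multiplicative and $R$-linear, and $F(h) = 1$ for every $h \in H$, we get
$$
F(\Theta(G) e) = \Theta(G), \qquad F\!\left( \sum_{h \in H} \chi(h) a_{h} h \right) = \sum_{h \in H} \chi(h) a_{h},
$$
so applying $F$ to the equation from Theorem~$\ref{thm:6.1.5}$ yields exactly
$$
\Theta(G) = \prod_{\chi \in \widehat{H}} \sum_{h \in H} \chi(h) a_{h},
$$
as desired. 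The homogeneity, the degree $[G:H]$, and the conjugacy-invariance condition on the $a_{h}$ are inherited verbatim from Theorem~$\ref{thm:6.1.5}$, since exactly the same polynomials $a_{h}$ appear on both sides.

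There is no substantive obstacle here; the proof is a one-line reduction once $F$ is in hand. The only thing that warrants care is verifying that $F$ really is a well-defined $R$-algebra homomorphism, but this is immediate from the universal property of the group algebra, since $1 \in R$ is a unit and the constant map $g \mapsto 1$ is a group homomorphism from $G$ into the units of $R$. Hence the proof reduces to: state Theorem~$\ref{thm:6.1.5}$, apply $F$, conclude.
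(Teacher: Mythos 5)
Your proposal is correct and is essentially identical to the paper's own proof, which likewise deduces Theorem~\ref{thm:6.1.6} from Theorem~\ref{thm:6.1.5} by applying the fundamental $RG$-function $F$ (the $R$-algebra homomorphism sending each $g$ to $1$). You simply spell out the intermediate computations $F(\Theta(G)e)=\Theta(G)$ and $F\left(\sum_{h}\chi(h)a_{h}h\right)=\sum_{h}\chi(h)a_{h}$ that the paper leaves implicit.
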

\begin{proof}
From Theorem~$\ref{thm:6.1.5}$ and the fundamental $RG$-function, we have 
$$
\Theta(G) = \prod_{\chi \in \widehat{H}} \sum_{h \in H} \chi(h) a_{h}. 
$$
This completes the proof. 
\end{proof}

From Theorem~$\ref{thm:1.0.2}$ and $\ref{thm:6.1.6}$, we have the following corollary.

\begin{cor}\label{cor:6.1.7}
Let $G$ be a finite group and $H$ an abelian subgroup of $G$. 
For all $\varphi \in \widehat{G}$, we have 
\begin{align*}
\deg{\varphi} \leq [G:H]. 
\end{align*}
\end{cor}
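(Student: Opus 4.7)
The plan is to combine the classical Frobenius factorization from Theorem~\ref{thm:1.0.2} with the new factorization from Theorem~\ref{thm:6.1.6} and extract the inequality by a short degree count, exploiting that the polynomial ring $R = \mathbb{C}[x_{g} ; g \in G]$ is a UFD.

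First I would recall that in the Frobenius factorization
$$
\Theta(G) = \prod_{\varphi \in \widehat{G}} \det{\left( \sum_{g \in G} \varphi (g) x_{g} \right)^{\deg{\varphi}}},
$$
each factor $P_{\varphi} := \det{\left(\sum_{g \in G} \varphi(g) x_{g}\right)}$ is a homogeneous polynomial of degree $\deg{\varphi}$ in the variables $x_{g}$, and moreover the $P_{\varphi}$ are pairwise non-associate irreducible elements of $R$; this last point is Frobenius's classical irreducibility theorem and is the one nontrivial external input I need. Next, by Theorem~\ref{thm:6.1.6},
$$
\Theta(G) = \prod_{\chi \in \widehat{H}} Q_{\chi}, \qquad Q_{\chi} := \sum_{h \in H} \chi(h)\, a_{h},
$$
with every $a_{h}$ homogeneous of degree $[G:H]$. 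Since $\chi(h) \in \mathbb{C}$, each $Q_{\chi}$ is a $\mathbb{C}$-linear combination of homogeneous polynomials of degree $[G:H]$, hence is either zero or itself homogeneous of that degree; and none can vanish because $\Theta(G) \neq 0$.

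Now fix $\varphi \in \widehat{G}$. The polynomial $P_{\varphi}$ is irreducible in the UFD $R$ and divides $\Theta(G) = \prod_{\chi} Q_{\chi}$, so it must divide some $Q_{\chi}$. A nonzero polynomial of degree $\deg{\varphi}$ can divide a nonzero polynomial of degree $[G:H]$ only when $\deg{\varphi} \leq [G:H]$, which is the desired bound. The step I expect to be the main obstacle is the appeal to the irreducibility of the Frobenius factors $P_{\varphi}$: this is classical and well known but is not established anywhere in the paper, so one must either cite it or supply a separate argument; once it is in hand, the rest of the argument is only bookkeeping with degrees in $R$.
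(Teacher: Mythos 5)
Your proposal is correct and follows essentially the same route as the paper, whose proof is simply the one-line observation that the corollary follows from Theorem~\ref{thm:1.0.2} together with Theorem~\ref{thm:6.1.6}; you have merely made explicit the degree count and the appeal to irreducibility of the Frobenius factors, which the paper takes as given by calling the factorization in Theorem~\ref{thm:1.0.2} the \emph{irreducible} factorization.
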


Remark that Corollary~$\ref{cor:6.1.7}$ follows from Frobenius reciprocity\cite{K}.

\section{{\bf Conjugate of the group algebra of the groups which have an index two abelian subgroup}}

In this section, we define a conjugation of elements of the group algebra of the group which has an index-two abelian subgroup.

First, we recall the conjugation of elements of $\mathbb{H}$. 
Let $z = x + j y \in \mathbb{H}$. 
The conjugation of $z$ defined as $\overline{z} = x - j y$. 
The conjugation has the following properties. 
\begin{enumerate}
\item $\Sdet{z} = z \overline{z}$. 
\item $\overline{\overline{z}} = z$. 
\item $z + \overline{z}$ and $z \overline{z} = \overline{z} z \in \mathbb{R} = Z(\mathbb{H})$. 
\item $\overline{z w} = \overline{w} \: \overline{z} \quad (w \in \mathbb{H})$. 
\item $z = \overline{z}$ if and only if $z \in \mathbb{H}$. 
\end{enumerate}
We refer to the above for definition of the conjugation of elements of the group algebra.

Let $R$ be a commutative ring, $G$ a group, 
$H$ an index-two abelian subgroup of $G$, 
$T = \{e, t \}$ a complete set of left coset representatives of $H$ in $G$, 
$L_{T}$ the left regular representation from $RG$ to $\Mat(2, RH)$ with respect to $T$, and 
$A = \alpha + t \beta \in RG$. 
We have 
\begin{align*}
L_{T}(A) = 
\begin{bmatrix}
\alpha & t \beta t \\ 
\beta & t^{-1} \alpha t 
\end{bmatrix} 
\end{align*}
and 
\begin{align*}
\Phi_{A}(X) &= 
\det{
\begin{bmatrix}
X - \alpha & t \beta t \\ 
\beta & X - t^{-1} \alpha t 
\end{bmatrix} 
} \\ 
&= X^{2} - (\alpha + t^{-1} \alpha t) X + \alpha t^{-1} \alpha t - \beta t \beta t. 
\end{align*}

We notice that 
\begin{align*}
&(X - (\alpha + t \beta)) (X - (t^{-1} \alpha t - t \beta)) \\ 
&\quad = X^{2} - (t^{-1} \alpha t - t \beta) X - (\alpha + t \beta) X + (\alpha + t \beta) (t^{-1} \alpha t - t \beta) \\ 
&\quad = X^{2} - (\alpha + t^{-1} \alpha t) X + \alpha t^{-1} \alpha t - \alpha t \beta + (t \beta t^{-1}) \alpha t - (t \beta t) \beta \\ 
&\quad = X^{2} - (\alpha + t^{-1} \alpha t) X + \alpha t^{-1} \alpha t - \alpha t \beta + \alpha (t \beta t^{-1}) t - \beta (t \beta t) \\ 
&\quad = X^{2} - (\alpha + t^{-1} \alpha t) X + \alpha t^{-1} \alpha t - \beta t \beta t \\ 
&\quad = \Phi_{A}(X). 
\end{align*}

Therefore, we define the conjugate of $A = \alpha + t \beta$ by 
$$
\overline{A} = t^{-1} \alpha t - t \beta. 
$$

The following theorem follows from Corollary~$\ref{cor:4.1.6}$ and a direct calculation:

\begin{thm}\label{thm:}
For all $A, B \in RG$, 
\begin{enumerate}
\item $\overline{\overline{A}} = A$. 
\item $A + \overline{A}$ and $A \overline{A} = \overline{A} A \in Z(RG)$. 
\item $\overline{AB} = \overline{B} \: \overline{A}$. 
\item $A = \overline{A}$ if and only if $A \in Z(RG)$. 
\end{enumerate}
\end{thm}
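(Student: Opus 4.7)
The strategy is to handle the four assertions separately, with parts (1), (3), and (4) being direct algebraic manipulations inside $RG$ and part (2) following almost for free from the factorisation $\Phi_{A}(X) = (X - A)(X - \overline{A})$ already verified earlier in the section together with Corollary~$\ref{cor:4.1.6}$. The crucial structural facts that keep the computations tractable are that $[G:H] = 2$ forces $H$ to be normal in $G$, that $t^{2} \in H$, and that $H$ is abelian (so conjugation by $t^{2}$ is the identity on $RH$).

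For (1), I would simply apply the definition twice. Since $t^{-1}\alpha t \in RH$ (by normality) and $-\beta \in RH$, the element $\overline{A}$ is already written in canonical form $\alpha' + t\beta'$ with $\alpha' = t^{-1}\alpha t$ and $\beta' = -\beta$; conjugating again produces $t^{-2}\alpha t^{2} + t\beta$, and conjugation by $t^{2} \in H$ acts trivially on $RH$, so $\overline{\overline{A}} = A$.

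For (2), reading off coefficients from $\Phi_{A}(X) = (X - A)(X - \overline{A}) = X^{2} - (A + \overline{A})X + A\overline{A}$, Corollary~$\ref{cor:4.1.6}$ (applied with $m = 1$, using that $H$ is a normal abelian subgroup) immediately places $A + \overline{A}$ and $A\overline{A}$ in $Z(RG) \cap RH \subset Z(RG)$. Writing $c := A + \overline{A}$, the equality $A\overline{A} = \overline{A}A$ is then formal: since $c$ is central and $\overline{A} = c - A$, we have $A\overline{A} = Ac - A^{2} = cA - A^{2} = \overline{A}A$. For (4), decomposing $A - \overline{A} = (\alpha - t^{-1}\alpha t) + 2t\beta$ along the direct sum $RG = RH \oplus tRH$ reduces $A = \overline{A}$ to $\alpha = t^{-1}\alpha t$ together with the vanishing of the $t\beta$-component; combined with the automatic commutativity of $A$ with $H$ (from abelianness of $H$), these are in turn equivalent to $A$ commuting with the generating set $H \cup \{t\}$ of $G$, i.e.\ $A \in Z(RG)$.

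The main technical obstacle is (3). Here I would expand $AB$ for $B = \gamma + t\delta$ and rewrite the result in the canonical form $p + tq$ with $p = \alpha\gamma + t^{2}(t^{-1}\beta t)\delta \in RH$ and $q = (t^{-1}\alpha t)\delta + \beta\gamma \in RH$, using only the moves $\alpha t = t(t^{-1}\alpha t)$ and $t^{2} \in H$. The same reorganisation applied to $\overline{B}\,\overline{A} = (t^{-1}\gamma t - t\delta)(t^{-1}\alpha t - t\beta)$ produces a four-term expansion that matches $\overline{AB} = t^{-1}pt - tq$ term-by-term, once one uses commutativity of $RH$ and the identity $t^{-1}\gamma t^{2}\beta = t\beta\gamma$ (which follows from $t^{2}\gamma = \gamma t^{2}$ in $RH$). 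The difficulty here is bookkeeping rather than anything conceptual: one has to keep the positions of $t$ and the $t$-conjugates aligned, and the argument is best written as a direct side-by-side comparison of the two four-term sums.
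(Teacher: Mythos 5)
Your treatments of (1), (2) and (3) are correct, and they are precisely the ``direct calculation'' plus Corollary~\ref{cor:4.1.6} that the paper invokes without further detail: (2) by reading the coefficients off the factorization $\Phi_{A}(X)=(X-A)(X-\overline{A})$ already displayed in Section~7, and (1), (3) by expanding the definition and using $t^{2}\in H$ with $H$ abelian and normal. So for those three parts you have simply written out the bookkeeping the paper omits.

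Part (4) contains a genuine gap. You reduce $A=\overline{A}$ to ``$\alpha=t^{-1}\alpha t$ and the $t\beta$-component vanishes,'' and then assert that this pair of conditions is \emph{equivalent} to $A$ commuting with the generating set $H\cup\{t\}$. Only the forward implication holds. A central element of $RG$ need not lie in $RH$: take $G=Q_{8}$, $H=\langle i\rangle$, $t=j$, and $A=j+j^{3}=t(e+t^{2})$, the class sum of $j$. Then $A\in Z(RG)$, while $\alpha=0$ and $\beta=e+t^{2}\neq 0$, so $\overline{A}=-t\beta=-A\neq A$. (Even simpler, if $G$ is abelian of order $4$ and $H$ is its index-two subgroup, then $Z(RG)=RG\not\subset RH$.) Thus the ``if'' direction of (4) --- central implies self-conjugate --- is not merely unproved by your argument; it is false as stated, and since the paper's own proof is only the phrase ``a direct calculation,'' the defect is in the statement rather than in your overall strategy. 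What your computation actually establishes is $A=\overline{A}\iff A\in Z(RG)\cap RH$, and even that requires $2$ not to be a zero-divisor in $R$ (Section~7 allows an arbitrary commutative ring, where $2t\beta=0$ does not force $\beta=0$). Your write-up should state this corrected equivalence rather than the claimed one.
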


We give the inverse formula of $2 \time 2$ matrix by conjugation.

\begin{thm}\label{thm:5}
Let $A, B, C, D \in RG$. Then we have 
\begin{align*}
\begin{bmatrix}
A & B \\ 
C & D 
\end{bmatrix}^{-1} 
= 
\begin{bmatrix}
\overline{D} & \overline{B} \\ 
\overline{C} & \overline{D} 
\end{bmatrix}
(\alpha \overline{\alpha} - \beta \gamma)^{-1} 
\begin{bmatrix}
\overline{\alpha} & - \beta \\ 
- \gamma & \alpha 
\end{bmatrix}
\end{align*}
where 
\begin{align*}
\begin{bmatrix}
\alpha & \beta \\ 
\gamma & \overline{\alpha}
\end{bmatrix}
= 
\begin{bmatrix}
A \overline{D} + B \overline{C} & A \overline{B} + B \overline{A} \\ 
C \overline{D} + D \overline{C} & D \overline{A} + C \overline{B}
\end{bmatrix}. 
\end{align*}
\end{thm}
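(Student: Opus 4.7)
The plan is to verify the formula by direct computation, using the properties of conjugation established in the preceding theorem (in particular $\overline{XY}=\overline{Y}\,\overline{X}$ and $X+\overline{X},\ X\overline{X}\in Z(RG)$). Set $M=\begin{bmatrix}A&B\\C&D\end{bmatrix}$ and $N=\begin{bmatrix}\overline{D}&\overline{B}\\\overline{C}&\overline{A}\end{bmatrix}$ (reading the apparent typo $\overline{D}$ in the stated $(2,2)$ entry of the conjugate-cofactor as $\overline{A}$). First I would compute $MN$ entrywise and identify the result with $\begin{bmatrix}\alpha&\beta\\\gamma&\overline{\alpha}\end{bmatrix}$ as defined in the statement.

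The key observation to make next is that the off-diagonal entries $\beta=A\overline{B}+B\overline{A}$ and $\gamma=C\overline{D}+D\overline{C}$ each have the form $X+\overline{X}$, hence lie in $Z(RG)$ by part (2) of the previous theorem; and that the $(2,2)$ entry $D\overline{A}+C\overline{B}$ equals $\overline{A\overline{D}+B\overline{C}}=\overline{\alpha}$ by part (3). Consequently $\alpha\overline{\alpha}-\beta\gamma\in Z(RG)$ as well, so the invertibility hypothesis $(\alpha\overline{\alpha}-\beta\gamma)^{-1}$ makes sense and this inverse commutes with every element of $RG$.

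Then I would verify the two-by-two identity
\begin{align*}
\begin{bmatrix}\alpha&\beta\\\gamma&\overline{\alpha}\end{bmatrix}\begin{bmatrix}\overline{\alpha}&-\beta\\-\gamma&\alpha\end{bmatrix}=(\alpha\overline{\alpha}-\beta\gamma)\,I_{2},
\end{align*}
where the off-diagonal terms $-\alpha\beta+\beta\alpha$ and $\gamma\overline{\alpha}-\overline{\alpha}\gamma$ vanish precisely because $\beta,\gamma\in Z(RG)$. Multiplying both sides on the left by $M N$ and dividing through by the central factor $\alpha\overline{\alpha}-\beta\gamma$ gives $M\cdot\bigl(N(\alpha\overline{\alpha}-\beta\gamma)^{-1}\begin{bmatrix}\overline{\alpha}&-\beta\\-\gamma&\alpha\end{bmatrix}\bigr)=I_{2}$, which is the claim. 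By Lemma~\ref{def:4.1.1} applied in $\mathrm{Mat}(2,RG)$ (noting that $L_{T}$ extends to matrix rings), this one-sided inverse is automatically two-sided, so no separate verification of the other order is needed.

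The only real obstacle is bookkeeping: I must keep track of when elements of $RG$ can and cannot be moved past one another, and in particular use centrality of $\beta,\gamma,\alpha\overline{\alpha}-\beta\gamma$ at exactly the right moments. Beyond that, the proof is a direct computation once the conjugate-cofactor matrix $N$ is recognized as the correct analog of the classical adjugate and the centrality properties from the previous theorem are invoked.
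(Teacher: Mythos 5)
Your proof is correct and follows essentially the same route as the paper: compute $M N=\begin{bmatrix}\alpha&\beta\\\gamma&\overline{\alpha}\end{bmatrix}$, observe that $\beta,\gamma$ are central (so all four entries pairwise commute and the classical commutative $2\times 2$ adjugate formula applies), and invoke the one-sided-implies-two-sided invertibility lemma. You also correctly diagnose the $(2,2)$ entry $\overline{D}$ as a typo for $\overline{A}$ --- a typo the paper's own proof repeats --- and your explicit verification of the adjugate identity and of $\alpha\overline{\alpha}-\beta\gamma\in Z(RG)$ only makes the argument more complete than the original.
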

\begin{proof}
We have 
\begin{align*}
\begin{bmatrix}
\alpha & \beta \\ 
\gamma & \overline{\alpha}
\end{bmatrix}
= 
\begin{bmatrix} 
A & B \\ 
C & D
\end{bmatrix} 
\begin{bmatrix} 
\overline{D} & \overline{B} \\ 
\overline{C} & \overline{D} 
\end{bmatrix}. 
\end{align*}
From $\beta, \gamma \in Z(RG)$, these elements $\alpha, \overline{\alpha}, \beta, \gamma$ are interchangeable. 
Therefore, we can use the inverse formula for $2 \times 2$ matrix whose elements are in commutative ring. 
This completes the proof. 
\end{proof}

\clearpage

\thanks{\bf{Acknowledgments}}
I am deeply grateful to Prof. Hiroyuki Ochiai who provided the helpful comments and suggestions. 
Also, I would like to thank my colleagues in the Graduate School of Mathematics of Kyushu University, 
in particular Tomoyuki Tamura and Yuka Suzuki for comments and suggestions. 
I would also like to express my gratitude to my family for their moral support and warm encouragements. 
This work was supported by a grant from the Japan Society for the Promotion of Science (JSPS KAKENHI Grant Number 15J06842).

\medskip
\begin{flushleft}
Naoya Yamaguchi\\
Graduate School of Mathematics\\
Kyushu University\\
Nishi-ku, Fukuoka 819-0395 \\
Japan\\
n-yamaguchi@math.kyushu-u.ac.jp
\end{flushleft}

\end{document}